\newtheorem{thm}{Theorem}[section]
\newtheorem{theorem}{Theorem}[section]
\newtheorem{cor}[thm]{Corollary}
\newtheorem{lemma}[thm]{Lemma}
\newtheorem{question}[thm]{Question}
\newtheorem{claim}[thm]{Claim}
\theoremstyle{definition}
\newtheorem{definition}[thm]{Definition}
\theoremstyle{remark}
\newtheorem{remark}[thm]{Remark}
\newtheorem{observation}[thm]{Observation}
\let\c@equation\c@thm
\numberwithin{equation}{section}
\title[Some remarks on uncountable rainbow Ramsey theory]{Some remarks on uncountable rainbow Ramsey theory}
\author{Jing Zhang}
\newcommand{\Addresses}{{
  \bigskip
  \footnotesize

 \textsc{Department of Mathematics,\\ Bar-Ilan University, Ramat-Gan 5290002, Israel}\par\nopagebreak
  \textit{E-mail}: \texttt{jingzhan@alumni.cmu.edu}
}}
\begin{document}

\begin{abstract}
We discuss the rainbow Ramsey theorems at limit cardinals and successors of singular cardinals, addressing some questions in \cite{MR2354904} and \cite{MR2902230}. In particular, we show for inaccessible $\kappa$, $\kappa\to^{poly}(\kappa)^2_{2-bdd}$ does not characterize weak compactness and for singular $\kappa$, $\mathrm{GCH}$ implies $\kappa^+\not\to^{poly} (\eta)^2_{<\kappa-bdd}$ for any $\eta\geq cf(\kappa)^+$ and $\square_\kappa$ implies $\kappa^+\to^{poly} (\nu)^2_{<\kappa-bdd}$ for any $\nu<cf(\kappa)^+$.
We also provide a simplified construction of a model for $\omega_2\not\to^{poly} (\omega_1)^2_{2-bdd}$ originally constructed in \cite{MR2902230} and show the witnessing coloring is indestructible under strongly proper forcings but destructible under some c.c.c forcing. Finally, we conclude with some remarks and questions on possible generalizations to rainbow partition relations for triples. 
\end{abstract}

\maketitle
\let\thefootnote\relax\footnotetext{2010 \emph{Mathematics Subject Classification}. Primary: 03E02, 03E35, 03E55. 

I thank Uri Abraham, James Cummings, Assaf Rinot and Ernest Schimmerling for helpful discussions, comments and corrections on earlier drafts. I am grateful to the anonymous referee who provides extensive comments and corrections which greatly improve the exposition.

The work was done when I was a graduate student at Carnegie Mellon University supported in part by the US tax payers. Part of the revision was done when I am a post doctoral fellow in Bar-Ilan University, supported by the Foreign Postdoctoral Fellowship Program of the Israel Academy of Sciences and Humanities and by the Israel Science Foundation (grant agreement 2066/18).
}

\section{Introduction}

Fix ordinals $\lambda,i, \kappa$ and $n\in \omega$.

\begin{definition}
 We use $\lambda\to (\kappa)^n_i$ to abbreviate: for any $f: [\lambda]^n \to i$, there exists $A\subset \lambda$ of order type $\kappa$ such that $f\restriction [A]^n$ is a constant function. Such $A$ is called a \emph{monochromatic} subset of $\lambda$ (with respect to $f$).
\end{definition}

\begin{definition}
We use $\lambda\to^{poly} (\kappa)^n_{i-bdd}$ to abbreviate: for any $f: [\lambda]^n \to \lambda$ that is \emph{i-bounded}, namely for any $\alpha\in \lambda$, $|f^{-1}\{\alpha\}|\leq i$, there exists $A\subset \lambda$ of order type $\kappa$ such that $f\restriction [A]^n$ is injective. Such $A$ is called a \emph{rainbow} subset of $\lambda$ (with respect to $f$).
\end{definition}

\begin{remark}
$\to^{poly}$ is sometimes denoted as $\to^*$. We adopt $\to^{poly}$ to avoid possible confusion, as rainbow subsets are sometimes called ``polychromatic'' subsets.
\end{remark}

$\lambda\to (\kappa)^n_i$ implies $\lambda\to^{poly} (\kappa)^n_{i-bdd}$ as given a $i$-bounded coloring it is possible to cook up a dual $i$-coloring for which any monochromatic subset will be a rainbow subset for the original coloring. This is the \emph{Galvin's trick}. This explains why rainbow Ramsey theory is also called sub-Ramsey theory in finite combinatorics.

In many cases, the rainbow analogue is a strict weakening. For example: 

\begin{enumerate}

\item[1] In finite combinatorics, the sub-Ramsey number $sr(K_n, k)$, which is the least $m$ such that $m\to^{poly}(n)_{k-bdd}^2$, is bounded by a polynomial in $n$ and $k$ (Alspach, Gerson, Hahn and Hell \cite{MR867747}). This is in contrast with the Ramsey number which grows exponentially.

\item[2] In reverse mathematics, over $RCA_0$, $\omega\to^{poly}(\omega)^2_{2-bdd}$ does not imply $\omega\to(\omega)^2_2$ (Csima and Mileti \cite{MR2583822}).

\item[3] In combinatorics on countably infinite structures, the Rado graph is Rainbow Ramsey but not Ramsey (Dobrinen, Laflamme, and Sauer \cite{MR3518438}).

\item[4] In combinatorics on the ultrafilters on $\omega$, Martin's Axiom implies there exists a Rainbow Ramsey ultrafilter that is not a Ramsey ultrafilter (Palumbo \cite{MR3135506}).

\item[5] In uncountable combinatorics, ZFC proves $\omega_1\not\to (\omega_1)_2^2$ but $\omega_1\to^{poly} (\omega_1)_{2-bdd}^2$ is consistent with ZFC (Todorcevic \cite{MR716846}). 

\end{enumerate}

Results in this note serve as further evidence that rainbow Ramsey theory is a strict weakening of Ramsey theory. We focus on the area of uncountable combinatorics.

The organization of the paper is: 

\begin{enumerate}
\item In Section \ref{inaccessible}, we discuss rainbow Ramsey theorems at limit cardinals. In particular, we show $\kappa\to^{poly} (\kappa)^2_{2-bdd}$ for an inaccessible cardinal $\kappa$ does not imply $\kappa$ is weakly compact, answering a question in \cite{MR2354904};
\item In Section \ref{singular}, we discuss the rainbow Ramsey theorems at the successor of singular cardinals. Answering a question in \cite{MR2902230}, we show $\mathrm{GCH}+\square_\kappa$ implies $\kappa^+\not\to^{poly} (\eta)^2_{<\kappa-bdd}$ for any $\eta\geq cf(\kappa)^+$ and $\kappa^+\to^{poly} (\nu)^2_{<\kappa-bdd}$ for any $\nu<cf(\kappa)^+$ .
\item In Section \ref{indestructible}, we use the method of Neeman developed in \cite{MR3201836} to simplify the construction of a model by Abraham and Cummings \cite{MR2902230} in which $\omega_2\not\to^{poly} (\omega_1)_{2-bdd}^2$. Furthermore, we show in this model, the witnessing coloring is indestructible under strongly proper forcings but destructible under c.c.c forcings. In other words, the coloring witnessing $\omega_2\not\to^{poly} (\omega_1)_{2-bdd}^2$ remains the witness to the same negative partition relation in any strongly proper forcing extension but there exists a c.c.c forcing extension that adds a rainbow subset of size $\omega_1$ for that coloring.
As a result, $\omega_2\not\to^{poly} (\omega_1)_{2-bdd}^2$ is compatible with the continuum being arbitrarily large.
\item In Section \ref{generalization}, we briefly discuss possibilities and restrictions of generalizations to partition relations for triples.
\end{enumerate}

\section{Rainbow Ramsey at limit cardinals}\label{inaccessible}

In \cite{MR2354904}, Abraham, Cummings and Smyth studied the rainbow Ramsey theory at small uncountable cardinals and successors of regular cardinals. They asked what can be said about the rainbow Ramsey theory at inaccessible cardinals. A test question they asked was for any inaccessible cardinal $\kappa$, whether $\kappa\to^{poly}(\kappa)^2_2$ characterize weak compactness. We answer this in the negative.
Fix a regular uncountable cardinal $\kappa$.

\begin{definition}
We say $f: [\kappa]^n \to \kappa$ is a normal coloring if whenever $\bar{a}, \bar{b}\in [\kappa]^n$ are such that $f(\bar{a})=f(\bar{b})$, then $\max \bar{a}=\max \bar{b}$.
\end{definition}

\begin{definition}
A normal function $f:[\kappa]^2\to \kappa$ is regressively bounded (reg-bdd) if there exists $\lambda<\kappa$ such that $\kappa \cap cof(\geq \lambda)$ is stationary in $\kappa$ and for all $\alpha\in \kappa\cap cof(\geq \lambda)$, and $i<\kappa$, $\{\beta\in \alpha: f(\beta, \alpha)=i\}$ is bounded in $\alpha$.
We use $\kappa \to^{poly} (\kappa)^2_{reg-bdd} $ to denote the statement: for any normal regressively bounded $f: [\kappa]^2\to \kappa$, there exists a subset $A\in [\kappa]^\kappa$ such that $A$ is a rainbow subset for $f$.
\end{definition}

\begin{remark}
Notice for any weakly inaccessible cardinal $\kappa$ and any cardinal $\lambda<\kappa$, $\kappa \to^{poly} (\kappa)^2_{reg-bdd}$ implies $\kappa \to^{poly} (\kappa)^2_{\lambda-bdd}$. To see this, given $f: [\kappa]^2\to \kappa$ that is $\lambda$-bounded, recursively, we may find a subset $B\in [\kappa]^\kappa$ such that $f\restriction [B]^2$ is normal. Hence without loss of generality we may assume $f$ is normal. Then it is easy to see that $f$ is regressively bounded witnessed by $\lambda^+$.
\end{remark}

Even though we cannot employ Galvin's trick of dual colorings since 
there may not be any $\lambda<\kappa$ that bounds the sizes of color classes, we do have that if $\kappa$ is weakly compact, then $\kappa\to^{poly} (\kappa)_{reg-bdd}^2$.

It turns out that weak compactness is not necessary. It suffices when $\kappa$ is a ``generic large cardinal'' (for more on this topic, see \cite{MR2768692}). Recall that a \emph{$\overrightarrow{C}$-sequence} on $\kappa$ is $\langle C_\alpha: \alpha\in \lim \kappa\rangle$ such that each $C_\alpha\subset \alpha$ is a club subset of $\alpha$.

\begin{lemma}\label{vanilla}
Let $\kappa$ be a regular cardinal.
Consider the following statements: 
\begin{enumerate}
\item Every $\overrightarrow{C}$-sequence on $\kappa$ is \emph{trivial stationarily often}, namely, there exists a club $D\subset \kappa$ such that for any $\alpha<\kappa$, there exist stationarily many $\beta<\kappa$ such that $D\cap \alpha \subset C_\beta$;
\item $\kappa\to^{poly} (\kappa)^2_{reg-bdd}$;
\item every $\overrightarrow{C}$-sequence on $\kappa$ is \emph{trivial}, in the sense that there exists a club $D\subset \kappa$ such that for any $\alpha<\kappa$ there exists $\beta<\kappa$ such that $D\cap \alpha\subset C_\beta$. 
\end{enumerate}
Then (1) implies (2).
\end{lemma}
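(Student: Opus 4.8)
The plan is to translate the coloring $f$ into a $\overrightarrow{C}$-sequence, feed that sequence into hypothesis (1), and read a rainbow set off the resulting club $D$ by a recursion of length $\kappa$. Fix a normal regressively bounded $f\colon[\kappa]^2\to\kappa$, with $\lambda<\kappa$ and stationary $S=\kappa\cap cof(\geq\lambda)$ witnessing reg-boundedness. For each $\beta\in S$ and $\alpha<\beta$ I set $h_\beta(\alpha)=\sup\{\gamma<\beta : f(\gamma,\beta)=f(\alpha,\beta)\}$; regressive boundedness guarantees $h_\beta(\alpha)<\beta$. I then define the $\overrightarrow{C}$-sequence by letting $C_\beta$ be the club of limit closure points of the map $\alpha\mapsto h_\beta(\alpha)$ when $\beta\in S$, and, crucially, letting $C_\beta$ be \emph{any} club of order type $cf(\beta)$ (hence $<\lambda$) when $\beta$ is a limit ordinal not in $S$.

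The design of $C_\beta$ for $\beta\in S$ is exactly what converts containment into injectivity: if $X\subseteq C_\beta$, then for any $\alpha<\alpha'$ in $X$ the closure-point property gives $h_\beta(\alpha)<\alpha'$, so $f(\alpha',\beta)\neq f(\alpha,\beta)$; that is, the ``column'' $\gamma\mapsto f(\gamma,\beta)$ is injective on $X$. By normality of $f$, any collision $f(\alpha_1,\beta_1)=f(\alpha_2,\beta_2)$ forces $\beta_1=\beta_2$, so a set all of whose columns are injective on the initial segment below their top is automatically rainbow. Thus it will suffice to build an increasing sequence $\langle a_\xi:\xi<\kappa\rangle$ with each $a_\xi\in S$ and $A_\xi:=\{a_\eta:\eta<\xi\}\subseteq C_{a_\xi}$, and then take $A=\{a_\xi:\xi<\kappa\}$.

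Here is where (1) enters. Applying it to the sequence above yields a club $D$ such that $T_\alpha:=\{\beta : D\cap\alpha\subseteq C_\beta\}$ is stationary for every $\alpha<\kappa$. I will keep the recursion inside $D$, so that at stage $\xi$, setting $\alpha=\sup A_\xi+1$, one has $A_\xi\subseteq D\cap\alpha$ and hence $D\cap\alpha\subseteq C_\beta$ already forces $A_\xi\subseteq C_\beta$. The next element $a_\xi$ is then chosen from $T_\alpha\cap D\cap(\sup A_\xi,\kappa)$, a stationary set intersected with a club tail of $D$, hence nonempty; regularity of $\kappa$ keeps the partial suprema below $\kappa$ at every stage $<\kappa$, so the recursion runs to full length $\kappa$.

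The main obstacle—and the reason for the order-type choice of $C_\beta$ off $S$—is forcing every top element $a_\xi$ to lie in $S$, since regressive boundedness, and thus the entire column argument, is only available there. This I handle by a counting observation: if $\alpha$ lies beyond the $\lambda$-th element of $D$, then $\mathrm{ot}(D\cap\alpha)\geq\lambda$, whereas $\mathrm{ot}(C_\beta)=cf(\beta)<\lambda$ for every limit $\beta\notin S$; since $D\cap\alpha\subseteq C_\beta$ would force $\mathrm{ot}(D\cap\alpha)\leq\mathrm{ot}(C_\beta)$, no such $\beta$ can belong to $T_\alpha$. Hence $T_\alpha\subseteq S$ once $\alpha$ is large, and starting the recursion with $a_0\in S\cap D$ above the $\lambda$-th element of $D$ guarantees that every subsequently chosen $a_\xi$ lands in $S$. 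The resulting $A$ then has order type $\kappa$ and, by the normality reduction of the second paragraph, is the desired rainbow subset.
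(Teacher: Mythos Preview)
Your proof is correct and follows essentially the same route as the paper: encode the coloring into a $\overrightarrow{C}$-sequence via closure points so that $f(\cdot,\beta)$ is injective on $C_\beta$ for $\beta\in S$, give $C_\beta$ small order type for $\beta\notin S$, apply (1) to get $D$, and recursively pick elements of $D\cap S$ each of whose $C$-set swallows the predecessors. One small gap: when you form $C_\beta$ as the closure points of $h_\beta$ for $\beta\in S$, you need $cf(\beta)>\omega$ to guarantee this set is unbounded in $\beta$; the paper handles this by first observing that (1) forces $\kappa$ to be a limit cardinal, so one may harmlessly replace $\lambda$ by $\max(\lambda,\aleph_1)$.
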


\begin{proof}
We first prove (1) implies (2).
Given a regressively bounded $f$ witnessed by $\lambda<\kappa$, we may assume $f(\cdot, \delta): \delta\to \delta$. It is not hard to see that (1) implies that $\kappa$ is a limit cardinal. Hence we may also assume $\lambda\geq \aleph_1$.

For each $\alpha\in cof(\geq \lambda)\cap \kappa$, we let $C_\alpha\subset \alpha$ be a club of order type $cf(\alpha)$ such that for any $\gamma_0<\gamma_1\in C_\alpha$, $f(\gamma_0, \alpha)\neq f(\gamma_1, \alpha)$. We can achieve this by first picking a club subset $C'_\alpha\subset \alpha$ of order type $cf(\alpha)$, and $C_\alpha\subset C_\alpha'$ is the closure points for the following function: $f: C'_\alpha \to C'_\alpha$ such that $f(\beta)$ is the least $\beta'$ such that for all $\beta''\geq \beta'$, $f(\beta, \alpha)\neq f(\beta'', \alpha)$. Note $f$ is well-defined by the regressive bounding condition on $f$ and we make sure that $f(\cdot, \alpha)\restriction C_\alpha$ is injective. If $\alpha\in cof(<\lambda)\cap \kappa$, then just let $C_\alpha$ be any club of type $cf(\alpha)$.

Let $D\subset \kappa$ be a club given by the conclusion of (1). We may then build a continuous sequence $\langle \alpha_i\in D: i<\kappa\rangle$ such that for any $i<\kappa$, $D\cap \alpha_i\subset C_{\alpha_{i+1}}$. Notice that on a tail of this sequence, $\alpha_{i+1}\in cof(\geq \lambda)\cap \kappa$. We may without loss of generality assume that $\alpha_{i+1}\in cof(\geq \lambda)\cap \kappa$ for all $i<\kappa$.

Let $Y\subset \kappa$ be a set of size $\kappa$ such that for any $i<j\in Y$, $\alpha_{i+1} < \alpha_j$. We claim that $X=_{def} \{\alpha_{i+1}: i\in Y\}$ is a rainbow subset for $f$. Fix $i<j<k \in Y$, since $\alpha_{i+1}<\alpha_{j+1}\in D\cap \alpha_{k} \subset C_{\alpha_{k+1}}$, we know that $f(\alpha_{i+1}, \alpha_{k+1})\neq f(\alpha_{j+1},\alpha_{k+1})$.

We prove (2) implies (3).

Let a $\langle C_\alpha: \alpha<\kappa\rangle$ be a given. Define $f(\alpha,\beta)=(\min (C_\beta - (\alpha+1)),\beta)$. It can be easily checked that that $f$ is regressively bounded. Let $X\subset \kappa$ be a rainbow subset of size $\kappa$ and let $C=\lim X=\{\delta \in \lim \kappa: \sup X\cap \delta=\delta\}$. We claim that $C$ is what we are looking for. It clearly suffices to show that any $\alpha\in C$ and any $\beta\in X- (\alpha+1)$, it is true that $\alpha\in C_\beta$. Since $\alpha$ is a limit point of $X$, and $f(\cdot, \beta)\restriction X\cap \alpha$ is injective, it must be the case that $\alpha\in \lim C_\beta$, which implies that $\alpha\in C_\beta$.

\end{proof}

\begin{remark}
Lambie-Hanson and Rinot \cite{CLHRinot} introduced and studied a cardinal invariant on $\overrightarrow{C}$-sequences. The statement (3) in Lemma \ref{vanilla} is what they call $\chi(\kappa)\leq 1$. Furthermore, they show $\chi(\kappa)\leq 1$ implies that $\kappa$ is greatly Mahlo.
\end{remark}

%
%
%

\begin{definition}
Suppose $\kappa$ is regular and ${}^{<\kappa}\kappa=\kappa$. We say \emph{$\kappa$ is generically weakly compact via $\kappa$-c.c forcings} if for any transitive $M$ with $|M|=\kappa$, $\kappa\in M$, ${}^{<\kappa}M\subset M$, there exists a $\kappa$-c.c forcing $P$ such that for any generic $G\subset P$ over $V$, in $V[G]$, there exists an elementary embedding $j: M\to N$ where $N$ is transitive and $\mathrm{crit}(j)=\kappa$.
\end{definition}

\begin{thm}\label{saturatedproof}
If a regular cardinal $\kappa$ is generically weakly compact via $\kappa$-c.c forcings, then $\kappa\to^{poly} (\kappa)_{reg-bdd}^2$.
\end{thm}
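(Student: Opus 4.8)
The plan is to reduce the theorem to clause (1) of Lemma \ref{vanilla} and then invoke the implication (1)$\Rightarrow$(2) proved there; so it suffices to show that generic weak compactness via $\kappa$-c.c.\ forcings implies that every $\overrightarrow{C}$-sequence on $\kappa$ is trivial stationarily often. Fix such a sequence $\overrightarrow{C}=\langle C_\alpha:\alpha\in\lim\kappa\rangle$. Using ${}^{<\kappa}\kappa=\kappa$, I would take an elementary submodel $X\prec H_\theta$ with $|X|=\kappa$, $\kappa\subseteq X$, ${}^{<\kappa}X\subseteq X$ and $\overrightarrow{C}\in X$, and let $M$ be its transitive collapse. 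Then $M$ is transitive, $|M|=\kappa$, $\kappa\in M$ and ${}^{<\kappa}M\subseteq M$; the key structural point I will use repeatedly is that every bounded subset of $\kappa$ lying in $V$ (having size $<\kappa$) already belongs to $M$ and is therefore fixed, as a set, by any elementary embedding with critical point $\kappa$. Applying the hypothesis to $M$ yields a $\kappa$-c.c.\ forcing $P$ such that in $V[G]$ there is an elementary $j\colon M\to N$ with $N$ transitive and $\mathrm{crit}(j)=\kappa$.

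Working in $V[G]$, set $E:=(j(\overrightarrow{C}))_\kappa$. Since $\kappa$ is a limit ordinal below $j(\kappa)$, $E$ is a club subset of $\kappa$ in $N$, hence a club in $\kappa$ in $V[G]$; as $P$ is $\kappa$-c.c., $E$ contains a club $D\in V$. I claim $D$ witnesses clause (1). Indeed, fix $\alpha<\kappa$. Because $D\cap\alpha$ is a bounded subset of $\kappa$ in $V$, it lies in $M$ and satisfies $j(D\cap\alpha)=D\cap\alpha\subseteq D\subseteq E=(j(\overrightarrow{C}))_\kappa$. Setting $A_\alpha:=\{\beta<\kappa:D\cap\alpha\subseteq C_\beta\}\in M$ and using that $(j(\overrightarrow{C}))_\gamma=C_\gamma$ for every $\gamma<\kappa$, elementarity gives $\kappa\in j(A_\alpha)$; equivalently, $A_\alpha$ belongs to the induced generic normal $M$-ultrafilter $U:=\{A\in\mathcal{P}(\kappa)^M:\kappa\in j(A)\}$. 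From $\kappa\in j(A_\alpha)$ one reads off immediately that $A_\alpha$ is unbounded in $\kappa$ in $V$ (reflecting ``$\exists\beta>\gamma$'' for each $\gamma<\kappa$), and that $A_\alpha$ meets every club of $M$ by normality of $U$.

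The main obstacle, which I expect to be the delicate part, is upgrading ``$A_\alpha\in U$'' to ``$A_\alpha$ is \emph{stationary} in $V$'', as required by clause (1): a single model $M$ of size $\kappa$ sees only $\kappa$-many clubs, so the generic ultrafilter $U$ a priori only certifies stationarity relative to $M$. The plan to close this gap is to exploit the chain condition decisively. Since $A_\alpha\in V$ and $\kappa$-c.c.\ forcing preserves both clubs of $\kappa$ and the stationarity (and non-stationarity) of ground-model subsets of $\kappa$, it is equivalent, and I hope to be cleaner, to argue stationarity in $V[G]$ directly, matching an arbitrary $V$-club against the $N$-side reflection provided by $E$; alternatively, one recasts the desired thread through $\overrightarrow{C}$ as a cofinal branch of a height-$\kappa$ tree with levels of size $<\kappa$ built from $\overrightarrow{C}$ (in the spirit of the $C$-sequence number of \cite{CLHRinot}), so that the branch furnished in $V[G]$ by the level-$\kappa$ node of $j$ of that tree descends to $V$ by the fact that $\kappa$-c.c.\ forcing adds no cofinal branches to such trees. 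The points requiring care are the verification that $\kappa$ remains regular and that the relevant level-size bound holds (where ${}^{<\kappa}\kappa=\kappa$ enters), and the exact form of the no-new-branches lemma for $P$. Once clause (1) is secured in $V$, Lemma \ref{vanilla} delivers $\kappa\to^{poly}(\kappa)_{reg-bdd}^2$.
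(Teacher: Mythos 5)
Your reduction to clause (1) of Lemma \ref{vanilla} and your entire setup --- the collapse $M$ of $X\prec H(\theta)$, the embedding $j$, the club $D\in V$ contained in $(j(\overrightarrow{C}))_\kappa$, and the observation that $\kappa\in j(A_\alpha)$ because $j(D\cap\alpha)=D\cap\alpha\subseteq (j(\overrightarrow{C}))_\kappa$ --- all match the paper's proof. But the step you flag as ``the main obstacle'' is a genuine gap in your write-up, and neither of the workarounds you sketch closes it. Arguing ``in $V[G]$ against an arbitrary $V$-club'' cannot work as stated: an arbitrary club $D^*\subseteq\kappa$ of $V$ need not belong to $M$, so $j(D^*)$ is undefined and the reflection at $\kappa$ tells you nothing about $D^*$. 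The tree/branch reformulation addresses a different statement (the existence of a thread, i.e.\ clause (3) rather than (1)) and in any case a no-new-cofinal-branches lemma is not what is needed to certify that $A_\alpha$ is stationary.

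The missing idea is that you already built the transfer mechanism into your choice of $M$: since $M$ is the transitive collapse of $X\prec H(\theta)$ with $\kappa\subseteq X$ and ${}^{<\kappa}X\subseteq X$, the set $A_\alpha\subseteq\kappa$ is fixed by the collapse, so ``$M\models A_\alpha$ is stationary'' is literally the statement ``$X\models A_\alpha$ is stationary'', and elementarity of $X$ in $H(\theta)$ then yields genuine stationarity in $V$. That $M\models A_\alpha$ is stationary follows from $\kappa\in j(A_\alpha)$ by exactly the argument you anticipate: if $D^*\in M$ were a club disjoint from $A_\alpha$, then $j(D^*)$ is a club in $j(\kappa)$ with $j(D^*)\cap\kappa=D^*$ unbounded in $\kappa$, hence $\kappa\in j(D^*)$, while $j(D^*)\cap j(A_\alpha)=\emptyset$ forces $\kappa\notin j(A_\alpha)$, a contradiction. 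Your worry that ``$M$ sees only $\kappa$-many clubs'' is answered precisely by this elementarity: if $A_\alpha$ were non-stationary in $V$, then $H(\theta)$, hence $X$, hence $M$, would already contain a witnessing club, since $A_\alpha\in X$. With this final step supplied, your argument coincides with the paper's.
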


\begin{proof}
We will show that every $\overrightarrow{C}$-sequence on $\kappa$ is trivial stationarily often, then the theorem follows from Lemma \ref{vanilla}. Given a $\overrightarrow{C}$-sequence on $\kappa$, $\bar{C}=\langle C_\alpha: \alpha<\kappa\rangle$, let $X\prec H(\theta)$ containing $\bar{C}$ where $\theta$ is a large enough regular cardinal, such that $|X|=\kappa$, $\kappa\subset X$ and ${}^{<\kappa}X\subset X$. Let $\pi: X\to M$ be the transitive collapse. Note that $\bar{C}$ is fixed by $\pi$. By the assumption on $\kappa$, there is some generic extension $V[G]$ by a $\kappa$-c.c forcing such that there exists an embedding $j: M\to N$ with $N$ being transitive and $\mathrm{crit}(j)=\kappa$.

Let $D'=j(\bar{C})(\kappa)$. Then $D'\subset \kappa$ is a club set in $V[G]$. Since the forcing is $\kappa$-c.c, there exists a club $D\subset \kappa\in V$ such that $D\subset D'$. We claim that $D$ trivializes $\bar{C}$ stationarily often. For any $\alpha<\kappa$, $D\cap \alpha\in M$ since $M$ contains $\kappa$ and is closed under $<\kappa$-sequences in $V$. Let $S=\{\beta<\kappa: D\cap \alpha\subset C_\beta\}$. We will show that $S$ is stationary. Notice that $S\in M$ and $M\models S$ is a stationary subset of $\kappa$. To see this, suppose for the sake of contradiction there is a club $D^*\in M$ disjoint from $S$. Then $\kappa \in j(D^*)$ since $j(D^*)$ is a club in $j(\kappa)$ by elementarity and $j(D^*)\cap \kappa=D^*$ which is unbounded in $\kappa$. This implies $j(D)\cap \kappa=D\not \subset j(\bar{C})(\kappa)=D'$, which is a contradiction. 
Since $\pi^{-1}(S)=S$, we know that $X\models S$ is a stationary subset of $\kappa$, hence by elementarity, $S\subset \kappa$ is stationary.

\end{proof}

\begin{remark}

To get a model of a cardinal $\kappa$ which is generically weakly compact via $\kappa$-c.c forcings but not weakly compact, we can proceed by the following: first we prepare the ground model such that the weakly compact cardinal $\kappa$ is indestructible under $\mathrm{Add}(\kappa,1)$, and then use a theorem of Kunen (\cite{MR495118}) that $\mathrm{Add}(\kappa,1)$ is forcing equivalent to $P*\dot{T}$ where $P$ adds a homogeneous $\kappa$-Suslin tree $\dot{T}$. The final model will be $V^P$.
\end{remark}

\begin{remark}
The Kunen model also shows that the existence of a $\kappa$-Suslin tree is consistent with $\kappa\to^{poly} (\kappa)^2_{reg-bdd}$. The existence of a $\kappa$-Suslin tree is sometimes strong enough to refute some weak consequences of $\kappa\to (\kappa)^2_2$. For example  Todorcevic proved in \cite{Todorcevic1989-TODTSA-5} that for any regular uncountable cardinal $\kappa$, the existence of $\kappa$-Suslin tree implies $\kappa\not\to [\kappa]^2_\kappa$, namely there exists a coloring $f: [\kappa]^2\to \kappa$ such that any $X\in [\kappa]^\kappa$, $f'' [X]^2 =\kappa$. 
\end{remark}

\begin{cor}
It is consistent relative to a weakly compact cardinal that for some inaccessible cardinal $\kappa$ that is not weakly compact, $\kappa\to^{poly} (\kappa)_{\lambda-bdd}^2$ for any $\lambda<\kappa$.
\end{cor}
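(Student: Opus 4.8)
The plan is to exhibit a single model $W$ in which $\kappa$ is inaccessible, fails to be weakly compact, yet is generically weakly compact via $\kappa$-c.c.\ forcings. Theorem \ref{saturatedproof} then yields $\kappa\to^{poly}(\kappa)^2_{reg-bdd}$ in $W$, and the reduction recorded in the remark following the definition of regressive boundedness (applicable since $\kappa$ is in particular weakly inaccessible) upgrades this to $\kappa\to^{poly}(\kappa)^2_{\lambda-bdd}$ for every $\lambda<\kappa$. The model $W$ is precisely the Kunen model sketched in the remark after Theorem \ref{saturatedproof}, so essentially all the work lies in verifying its three defining properties.

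First I would fix a weakly compact $\kappa$ and run a standard preparatory iteration (a Hamkins-style lottery or fast-function preparation) after which $\kappa$ is weakly compact and its weak compactness is indestructible under $\mathrm{Add}(\kappa,1)$. Appealing to Kunen's factorization \cite{MR495118}, I write $\mathrm{Add}(\kappa,1)\cong P*\dot{T}$, where $P$ adds a $\kappa$-Suslin tree $\dot{T}$, and set $W=V^P$. Since $P$ is $<\kappa$-closed and $\kappa^+$-c.c., it adds no bounded subsets of $\kappa$ and preserves cardinals and cofinalities up to and including $\kappa$, so $\kappa$ remains inaccessible in $W$. On the other hand $T$ is a $\kappa$-Suslin, hence $\kappa$-Aronszajn, tree in $W$, so the tree property fails at $\kappa$ and $\kappa$ is not weakly compact in $W$.

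The heart of the argument is generic weak compactness. Given a transitive $M$ with $|M|=\kappa$, $\kappa\in M$ and ${}^{<\kappa}M\subseteq M$ (closure computed in $W$), I take the witnessing $\kappa$-c.c.\ forcing to be $T$ itself; it is $\kappa$-c.c.\ in $W$ precisely because it is Suslin. Forcing with $T$ over $W$ returns $V^{P*\dot{T}}=V^{\mathrm{Add}(\kappa,1)}$, in which $\kappa$ is weakly compact by the choice of preparation. Two points then need checking. First, the closure of $M$ must survive the extension: since both $\mathrm{Add}(\kappa,1)$ and $P$ are $<\kappa$-closed, they are $<\kappa$-distributive, so $V$, $W=V^P$, and $V^{P*\dot{T}}$ all have the same $<\kappa$-sequences of ordinals; hence the quotient $T$ adds no new $<\kappa$-sequence of ordinals over $W$, i.e.\ $T$ is $<\kappa$-distributive over $W$. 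Fixing in $W$ a bijection $\kappa\to M$ and coding sequences from $M$ by sequences of ordinals, this shows ${}^{<\kappa}M\subseteq M$ continues to hold in $W^T$. Second, with $M$ transitive, of size $\kappa$, containing $\kappa$ and closed under $<\kappa$-sequences in the weakly compact model $W^T$, the embedding characterization of weak compactness supplies a transitive $N$ and an elementary $j\colon M\to N$ with $\mathrm{crit}(j)=\kappa$, lying in $W^T$. This is exactly the data demanded by the definition, so $\kappa$ is generically weakly compact via $\kappa$-c.c.\ forcings in $W$, and the corollary follows.

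The main obstacle I anticipate is the bookkeeping around the quotient forcing $T$: one must simultaneously use that $T$ is $\kappa$-c.c.\ (to meet the hypothesis of generic weak compactness) and that $T$ is $<\kappa$-distributive over $W$ (so that the $<\kappa$-closure of $M$, and hence the applicability of the weak compactness embedding characterization, is not lost when passing to $W^T$). A secondary delicate point is the indestructibility preparation, which must be arranged so that $\kappa$ is weakly compact in $V^{\mathrm{Add}(\kappa,1)}=W^T$ while $W=V^P$ still carries a $\kappa$-Suslin tree; this is standard indestructibility technology, and it is the reason the conclusion is only a consistency statement \emph{relative to} a weakly compact cardinal.
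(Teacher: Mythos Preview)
Your proposal is correct and follows exactly the approach the paper indicates in the remark after Theorem~\ref{saturatedproof}: prepare so that weak compactness of $\kappa$ is indestructible under $\mathrm{Add}(\kappa,1)$, invoke Kunen's factorization $\mathrm{Add}(\kappa,1)\cong P*\dot{T}$, and take $W=V^P$. You have simply filled in the verifications (inaccessibility via closure and chain condition of $P$, failure of weak compactness via the Suslin tree, and generic weak compactness via forcing with $T$) that the paper leaves implicit, including the observation that $T$ is $<\kappa$-distributive over $W$ so that the closure of $M$ persists.
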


\begin{cor}
If $\kappa$ is real-valued measurable, then $\kappa\to^{poly} (\kappa)_{\lambda-bdd}^2$ for any $\lambda<\kappa$.
\end{cor}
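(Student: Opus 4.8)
The plan is to reduce everything to the trivialization property already isolated in Lemma~\ref{vanilla} and Theorem~\ref{saturatedproof}. By Ulam's theorem a real-valued measurable $\kappa$ is weakly inaccessible, so by the remark that for weakly inaccessible $\kappa$ one has $\kappa\to^{poly}(\kappa)^2_{reg-bdd}\Rightarrow\kappa\to^{poly}(\kappa)^2_{\lambda-bdd}$, it suffices to prove $\kappa\to^{poly}(\kappa)^2_{reg-bdd}$; and by Lemma~\ref{vanilla} it is enough to show that every $\overrightarrow{C}$-sequence on $\kappa$ is trivial stationarily often. I would obtain this exactly as in the proof of Theorem~\ref{saturatedproof}, the only change being the source of the generic elementary embedding: instead of a weak-compactness generic on a small submodel, I would use the generic ultrapower furnished by the witnessing measure.

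Concretely, let $\mu$ be a $\kappa$-additive atomless probability measure on $\mathcal{P}(\kappa)$ vanishing on singletons, and let $I$ be its null ideal. Then $I$ is $\kappa$-complete and contains every bounded subset of $\kappa$, and $\mathcal{P}(\kappa)/I$ is a measure algebra, hence $\mathrm{c.c.c.}$; in particular it is $\kappa$-c.c. and, being $\kappa^+$-saturated, precipitous. Forcing with $\mathcal{P}(\kappa)/I$ therefore yields a well-founded generic ultrapower $j\colon V\to M$ with $M$ transitive, $\mathrm{crit}(j)=\kappa$ and $j(\kappa)>\kappa$. The point I want to stress is that $j$ is here defined on all of $V$, so, unlike in Theorem~\ref{saturatedproof}, I never pass to a $\kappa$-sized submodel closed under ${<}\kappa$-sequences and hence never invoke ${}^{<\kappa}\kappa=\kappa$ (which a real-valued measurable cardinal need not satisfy). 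This is precisely what lets the corollary bypass a literal application of the theorem.

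Given $\overrightarrow{C}=\langle C_\alpha:\alpha<\kappa\rangle\in V$, set $D'=j(\overrightarrow{C})(\kappa)$, a club subset of $\kappa$ in $M$ and hence in $V[G]$. Since the forcing is $\kappa$-c.c., there is a club $D\subseteq\kappa$ in $V$ with $D\subseteq D'$, by the same extraction argument used in the proof of Theorem~\ref{saturatedproof}. Fix $\alpha<\kappa$ and put $S=\{\beta<\kappa: D\cap\alpha\subseteq C_\beta\}$. Because $\alpha<\mathrm{crit}(j)$ we have $j(D)\cap\alpha=D\cap\alpha\subseteq D\subseteq D'=j(\overrightarrow{C})(\kappa)$, so $\kappa\in j(S)$. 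If $S$ were nonstationary in $V$, fix a club $E\in V$ disjoint from $S$; then $\kappa\in j(E)$, since $E$ is unbounded and $j(E)$ is closed, while $j(S)\cap j(E)=j(S\cap E)=\varnothing$ by elementarity, a contradiction. Hence each such $S$ is stationary, i.e.\ $D$ trivializes $\overrightarrow{C}$ stationarily often, completing the reduction.

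The main obstacle, and really the only place demanding care, is the set-up of the generic ultrapower: verifying that the quotient by the null ideal is $\kappa$-c.c. (so that $V$-clubs can be extracted and uncountable cofinalities are preserved) and that the ideal is precipitous with critical point exactly $\kappa$ (so that $j$ is a genuine well-founded embedding fixing the predecessors of $\kappa$ and moving $\kappa$). Once these standard facts about real-valued measurable cardinals are in hand, the trivialization argument is a verbatim transcription of the one in Theorem~\ref{saturatedproof}, now applied to $V$ itself rather than to a transitive collapse, which if anything shortens the stationarity pullback.
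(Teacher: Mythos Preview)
Your argument is correct and follows the same trivialization strategy as the paper: show every $\overrightarrow{C}$-sequence on $\kappa$ is trivial stationarily often via a generic elementary embedding coming from a $\kappa$-c.c.\ forcing, then invoke Lemma~\ref{vanilla} and the remark that $\kappa\to^{poly}(\kappa)^2_{reg\text{-}bdd}$ implies $\kappa\to^{poly}(\kappa)^2_{\lambda\text{-}bdd}$ for weakly inaccessible $\kappa$. The paper records the corollary without proof, evidently intending it as immediate from Theorem~\ref{saturatedproof}.

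Your write-up, however, contains a genuine improvement over a literal citation of that theorem. The definition of ``generically weakly compact via $\kappa$-c.c.\ forcings'' presupposes ${}^{<\kappa}\kappa=\kappa$, and the proof of Theorem~\ref{saturatedproof} uses closure of the small model $M$ under ${<}\kappa$-sequences to guarantee $D\cap\alpha\in M$ and hence $S\in M$. A real-valued measurable $\kappa$ below the continuum need not satisfy ${}^{<\kappa}\kappa=\kappa$, so the theorem is not literally applicable. Your fix---running the generic ultrapower $j:V\to M$ on all of $V$ rather than on a $\kappa$-sized collapse---sidesteps this completely: $D\cap\alpha$ and $S$ live in $V$ automatically, and the stationarity pullback goes through verbatim without any closure hypothesis. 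This is the right way to derive the corollary, and your identification of the obstacle is on point.
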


\begin{cor}
If $\kappa$ is weakly compact, then $\kappa\to^{poly} (\kappa)^2_{reg-bdd}$ is indestructible under any forcing satisfying $\lambda$-c.c. for some $\lambda<\kappa$.
\end{cor}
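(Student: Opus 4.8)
The plan is to reduce the given $\lambda$-c.c.\ forcing to one of size at most $\kappa$, prove the partition relation in that smaller extension, and transfer the rainbow set back by absoluteness. Fix a $\lambda$-c.c.\ forcing $Q$ with $\lambda<\kappa$, a generic $H$, and a normal reg-bdd coloring $f\in V[H]$. Since $Q$ is $\lambda$-c.c.\ and $\kappa$ is (weakly compact, hence) inaccessible in $V$, a nice name for $f$, viewed as a subset of $\kappa$, mentions at most $\kappa\cdot\lambda=\kappa$ conditions; letting $\mathbb{B}$ be the complete subalgebra of $\mathrm{RO}(Q)$ they generate, one has $|\mathbb{B}|\le\kappa^{<\lambda}=\kappa$, $\mathbb{B}$ is $\lambda$-c.c., and $f\in V[h]$ where $h=H\cap\mathbb{B}$. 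Thus $V[H]=V[h][H']$ with $H'$ generic for the $\lambda$-c.c.\ quotient $\mathrm{RO}(Q)/\mathbb{B}$.

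First I would check that the problem transfers cleanly to $V[h]$. Normality of $f$ and boundedness of its color classes are properties of $f$ alone, hence absolute. The quotient forcing $H'$ is $\lambda$-c.c., so it preserves all cofinalities $\ge\lambda$; therefore $\kappa\cap cof(\ge\lambda)$ is the same set in $V[h]$ and in $V[H]$, and since $V[h]\subseteq V[H]$, its stationarity is downward absolute. Hence $f$ remains a normal reg-bdd coloring in $V[h]$, witnessed by the same $\lambda$. Conversely, ``$X$ is rainbow for $f$'' means $f\restriction[X]^2$ is injective, which is absolute, and $\kappa$ is preserved throughout; so any rainbow $X\in V[h]$ of order type $\kappa$ remains a rainbow set of order type $\kappa$ in $V[H]$. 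It therefore suffices to prove $\kappa\to^{poly}(\kappa)^2_{reg-bdd}$ in $V[h]$.

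For this I would verify the hypotheses of Theorem \ref{saturatedproof} in $V[h]$. Counting nice $\mathbb{B}$-names in $V$ (each value of a function $\gamma\to\kappa$, for $\gamma<\kappa$, is decided by an antichain of size $<\lambda$) gives $\kappa^{\gamma}=\kappa$ in $V[h]$, so $\kappa^{<\kappa}=\kappa$ there, \emph{even though $\kappa$ may no longer be inaccessible in $V[h]$}. The heart of the argument is that $\kappa$ stays generically weakly compact via $\kappa$-c.c.\ forcings in $V[h]$. Given a transitive $M\in V[h]$ of size $\kappa$ with ${}^{<\kappa}M\subseteq M$ and $\kappa\in M$, fix a $\mathbb{B}$-name $\dot M$ and an elementary $M^{+}\prec H(\theta)^{V}$ of size $\kappa$ with $\kappa\subseteq M^{+}$, ${}^{<\kappa}M^{+}\subseteq M^{+}$, containing $\mathbb{B}$ (coded as a subset of $\kappa$) and $\dot M$; after transitive collapse, which fixes $\kappa$ and $\mathbb{B}$, we may assume $\dot M\in\bar{M^{+}}$ and $M=\dot M_{h}\in\bar{M^{+}}[h]$. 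Applying the embedding characterization of the weakly compact $\kappa$ in $V$ yields $j\colon\bar{M^{+}}\to\bar N$ with $\mathrm{crit}(j)=\kappa$ and ${}^{<\kappa}\bar N\subseteq\bar N$. Because every maximal antichain of $\mathbb{B}$ has size $<\lambda<\kappa$, it lies in $\bar{M^{+}}$ and is fixed by $j$; a compatibility-absoluteness argument then shows that $j$ witnesses $\mathbb{B}\lessdot j(\mathbb{B})$ in $V$ and that $j(\mathbb{B})$ is $\lambda$-c.c.\ in $V$ (using ${}^{<\kappa}\bar N\subseteq\bar N$). Consequently $j(\mathbb{B})/h$ is $\kappa$-c.c.\ in $V[h]$; forcing with it produces $h^{*}\supseteq j''h$ that is $j(\mathbb{B})$-generic over $V$, so $j$ lifts to $j^{+}\colon\bar{M^{+}}[h]\to\bar N[h^{*}]$, and $j^{+}\restriction M\colon M\to j^{+}(M)$ is the required generic embedding with critical point $\kappa$. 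Theorem \ref{saturatedproof} now gives a rainbow $X\in V[h]$ of order type $\kappa$, which by the absoluteness above is rainbow in $V[H]$.

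The main obstacle is the preservation of generic weak compactness, i.e.\ the lifting step: one must confirm that $j$ really yields a complete embedding $\mathbb{B}\lessdot j(\mathbb{B})$ (this is exactly where $\lambda<\mathrm{crit}(j)$ enters, so that all antichains of $\mathbb{B}$ are captured and fixed), that the quotient $j(\mathbb{B})/h$ is $\kappa$-c.c.\ in $V[h]$, and that the lift restricts to a genuine generic embedding of the originally given $M$. A secondary subtlety, easy to miss, is that $\kappa$ need not remain inaccessible in the extension (a $\lambda$-c.c.\ forcing can blow up $2^{<\kappa}$), so one cannot invoke weak compactness of $\kappa$ there directly and must instead rely on the weaker generic version together with the bound $\kappa^{<\kappa}=\kappa$ --- which the size-$\le\kappa$ reduction to $\mathbb{B}$ is precisely designed to preserve.
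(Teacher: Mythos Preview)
Your argument is correct and is exactly what the paper has in mind: the corollary is stated without proof, to be read as an immediate consequence of Theorem~\ref{saturatedproof}, and your reduction to a size-$\le\kappa$ complete subalgebra $\mathbb{B}$ followed by lifting a weak-compactness embedding through $\mathbb{B}\lessdot j(\mathbb{B})$ is precisely how one verifies that $\kappa$ remains generically weakly compact via $\kappa$-c.c.\ forcing in $V[h]$. One small point to tidy: the reg-bdd witness $\lambda'$ for $f$ in $V[H]$ need not coincide with the chain-condition parameter $\lambda$, so to obtain a single witness that works in $V[h]$ replace both by $\max(\lambda,\lambda')$ and invoke that $\kappa$ is Mahlo in $V$ (so $\kappa\cap cof(\ge\mu)$ is stationary for every $\mu<\kappa$, and this survives the $\lambda$-c.c.\ forcing).
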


The trick of using some large enough ordinal to ``guide'' the construction can also be used analogously to prove the following, which provides more contrast with its dual Ramsey statement:

\begin{lemma}\label{singularstronglimit}
For any singular strong limit $\kappa$, $\kappa\to^{poly} (\kappa)_{\lambda-bdd}^2$ for any $\lambda<\kappa$. 
\end{lemma}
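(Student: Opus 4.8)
The plan is to build a rainbow set of size $\kappa$ by gluing together $\mathrm{cf}(\kappa)$ many rainbow ``blocks,'' each produced by a canonical–partition argument that is cheap precisely because $\kappa$ is strong limit, and to use an elementary submodel of $H(\theta)$ for a large regular $\theta$ as the ``guiding large ordinal,'' exactly in the spirit of the previous proofs. Throughout, write $\mu=\mathrm{cf}(\kappa)$ and fix an increasing continuous sequence of cardinals $\langle \kappa_i:i<\mu\rangle$ cofinal in $\kappa$ with $\kappa_0>\lambda+\mu$. The basic bookkeeping facts I would use are that $\kappa$ is a limit cardinal, so a product of two cardinals below $\kappa$ stays below $\kappa$, and that $\kappa$ is strong limit, so $2^{\sigma}<\kappa$ for all $\sigma<\kappa$. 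A set $A$ is rainbow iff $f$ is injective on $[A]^2$, and a coincidence $f(\{a,b\})=f(\{c,d\})$ is of one of two kinds: either the larger endpoints differ, or the two pairs share their larger endpoint $\beta$ and $f(\gamma,\beta)=f(\gamma',\beta)$. Coincidences of the first kind will be eliminated by forbidding previously used colors, a set of size $<\kappa$ by strong‑limitness; coincidences of the second kind are the real content.

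For the building block I would use the Erd\H{o}s--Rado canonical partition theorem: for every $\sigma<\kappa$ there is a cardinal $\theta(\sigma)<\kappa$ (it exists because $\kappa$ is strong limit) such that any coloring of $[\theta(\sigma)]^2$, with arbitrarily many colors, has a subset of size $\sigma$ on which $f$ is \emph{canonical}, i.e. constant, or a function of the smaller endpoint, or a function of the larger endpoint, or injective. Since $f$ is $\lambda$‑bounded and $\sigma\geq\kappa_0>\lambda$, each of the first three types would force some color to be used at least $\sigma>\lambda$ times, a contradiction; hence the canonical subset is injective, i.e. rainbow. Thus any $\lambda$‑bounded coloring has rainbow subsets of \emph{every} size $\sigma<\kappa$, and moreover one can be found inside any prescribed ground set of size at least $\theta(\sigma)$.

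For the gluing I would build a continuous $\in$‑chain $\langle N_i:i<\mu\rangle$ of elementary submodels of $H(\theta)$ with $f\in N_0$, $|N_i|=\kappa_i$, $\kappa_i\subseteq N_i$, $\langle N_j:j\leq i\rangle\in N_{i+1}$, and $\delta_i:=N_i\cap\kappa\in\kappa$ increasing to $\kappa$. Recursively I would choose blocks $A_i\in N_{i+1}$ with $A_i\subseteq(\delta_i,\delta_{i+1})$ and $|A_i|=\kappa_i$ so that, writing $A_{<i}=\bigcup_{j<i}A_j$: (a) $A_i$ is internally rainbow, every $\beta\in A_i$ has $f(\cdot,\beta)$ injective on $A_{<i}$, and no color on a pair meeting $A_i$ lies in the (size $<\kappa$) set of previously used colors; and (b) the ``good‑level'' set $E_{\le i}=\{\beta<\kappa: f(\cdot,\beta)\text{ is injective on }A_{\le i}\}$ is unbounded in $\kappa$ and lies in $N_{i+1}$. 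Invariant (b) is what keeps the recursion alive: choosing $A_{i+1}$ inside $E_{\le i}$ secures clause (a) at the next stage, and elementarity of $N_{i+1}$, which believes $E_{\le i}$ unbounded, furnishes candidates above $\delta_{i+1}$. The final set $A=\bigcup_{i<\mu}A_i$ then has size $\sum_{i<\mu}\kappa_i=\kappa$ and is rainbow by (a).

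The hard part, which I expect to absorb essentially all the work, is producing a block $A_i$ satisfying both (a) and (b): a single pair $\{\gamma,\gamma'\}$ can satisfy $f(\gamma,\beta)=f(\gamma',\beta)$ for unboundedly many $\beta$, so a naive greedy extension of a rainbow set can simply get stuck, and these ``upward collisions'' are exactly the same‑larger‑endpoint coincidences flagged above. The strategy here is to work inside the unbounded set $E_{<i}$, first thin it—using $\lambda$‑boundedness together with strong‑limitness so that all the relevant color and point sets have size $<\kappa$—to a subregion on which the dangerous upward collisions do not persist, then apply the canonical block of the second paragraph to extract an internally rainbow $A_i$ of size $\kappa_i$, and finally reflect through $N_{i+1}$ to verify that $E_{\le i}$ is still unbounded. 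Pinning down this reflection step is the crux. The contrast with the dual relation is then visible: the genuinely local rainbow content is supplied for free by the canonical partition theorem, whereas $\kappa\to(\kappa)^2_2$ fails at every singular $\kappa$.
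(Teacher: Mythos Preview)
Your outline has a genuine gap at exactly the point you flag as ``the crux'': you never explain how to keep $E_{\le i}=\{\beta<\kappa: f(\cdot,\beta)\text{ is injective on }A_{\le i}\}$ unbounded. The worry you raise is real and is not resolved by the vague ``thin it'' step. For a fixed pair $\gamma<\gamma'$ in $A_{\le i}$, the set $\{\beta:f(\gamma,\beta)=f(\gamma',\beta)\}$ can have size $\kappa$; $\lambda$-boundedness says nothing here because for different $\beta$ the shared color is different. So even with two elements in hand, $E_{\le 0}$ can be bounded, and no amount of thinning of $E_{<i}$ before you pick $A_i$ protects you against collisions created \emph{by} $A_i$. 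Your reflection idea (``$E_{\le i}\in N_{i+1}$, so elementarity says it is unbounded'') is circular: to conclude unboundedness by elementarity you need a witness above $N_{i+1}\cap\kappa$, and you have produced none.

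The paper's proof supplies precisely this missing witness, and in doing so replaces your unbounded-set invariant by a much lighter one. After reducing to a normal coloring, it fixes the chain $\langle N_i:i<\mu\rangle$ with $^{\kappa_i}N_i\subset N_i$ and sets $\delta_i=\sup(N_i\cap\kappa_{i+2})$ once and for all. The recursion then maintains only: \emph{for every $j\ge i$, $A_i\cup\{\delta_j\}$ is rainbow}. This is just $\mu$ many conditions, and at each one-point extension you must avoid, for each $j\ge i$, the set $B_j$ of $\gamma$ with $f(\alpha,\delta_j)=f(\gamma,\delta_j)$ for some $\alpha$ already chosen; each $B_j$ has size $\le\kappa_i$ by $\lambda$-boundedness, and there are only $\mu$ of them, so the union is avoidable. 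The invariant simultaneously \emph{is} the reflection witness: since $A^*\cup C_\nu\in N_i$ (by closure of $N_i$ under $\kappa_i$-sequences), the set $A(A^*\cup C_\nu)=\{\gamma<\kappa_{i+2}: A^*\cup C_\nu\cup\{\gamma\}\text{ is rainbow}\}$ lies in $N_i$, and $\delta_i$ belongs to it by the invariant; hence it is unbounded (indeed stationary) in $\kappa_{i+2}$, giving plenty of room for the next element. In short, the ``guiding large ordinal'' you allude to is not the model $N_i$ itself but its sup $\delta_i$, and fixing the entire cofinal sequence $\langle\delta_j\rangle$ \emph{in advance} is what makes the bookkeeping finite. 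Once the invariant is right, the canonical partition theorem is unnecessary: the blocks are built one point at a time by straightforward avoidance.
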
 

\begin{remark}
Given a $\lambda$-bounded coloring $f$ on $[\kappa]^2$, we claim that there is $B\in [\kappa]^{\kappa}$ such that $f\restriction [B]^2$ is normal. Fix a continuous sequence of strictly increasing regular cardinals $\langle \kappa_i: i<cf(\kappa)\rangle$ with $\kappa_0>\max \{cf(\kappa), \lambda\}$ converging to $\kappa$. We find $\langle A_i: i<cf(\kappa)\rangle$ such that 
\begin{itemize}
\item for any $i<cf(\kappa)$, $A_i\subset \kappa_i$ and $|A_i|=\kappa_i$
\item for any $i<j<cf(\kappa)$, $A_i\subsetneq A_j$
\item for any limit $\delta<cf(\kappa)$, $A_\delta=\bigcup_{i<\delta} A_i$
\item for any $i<cf(\kappa)$, $f\restriction [A_i]^2$ is normal
\end{itemize}
The construction clearly gives $B=\bigcup_{i<cf(\kappa)} A_i$ such that $f\restriction [B]^2$ is normal.
The construction at limit stages is clear. At stage $i+1$, we inductively find a subset $C\subset \kappa_{i+1}-\kappa_i$ of size $\kappa_{i+1}$ such that $f\restriction [A_i\cup C]^2$ is normal. Suppose we have built $C'\subset \kappa_{i+1}-\kappa_i$ of size $\leq \kappa_i$, we demonstrate how to add one more element. As $|C'\cup A_i|\leq \kappa_i, \lambda<\kappa_i$ and $\kappa_{i+1}$ is regular, there exists $\gamma>\max C'+1$ such that there do not exist $a\in [C'\cup A_i]^2, \beta\in C'\cup A_i$ with $f(a)=f(\beta, \gamma)$. It is easy to see that $f\restriction [A_i\cup C'\cup \{\gamma\}]^2$ is normal.
\end{remark}

\begin{proof}[Proof of Lemma \ref{singularstronglimit}]
Fix a $\lambda$-bounded coloring $f: [\kappa]^2\to \kappa$. By the remark above, we may assume $f$ is normal.
Let $\eta=cf(\kappa)$. Fix an increasing sequence of regular cardinals $\langle \kappa_i: i<\eta\rangle$ such that 
\begin{enumerate}
\item $\kappa_0>\max\{\lambda, \eta\}$;
\item $\langle \kappa_i: i<\eta\rangle$ converges to $\kappa$;
\item $\kappa_{i+1}^{\kappa_{i}}=\kappa_{i+1}$ for all $i<\eta$.
\end{enumerate}

Let $\theta$ be a large enough regular cardinal and fix an $\in$-increasing chain $\langle N_i\prec H(\theta): i<\eta \rangle$ such that $|N_i|=\kappa_{i+1}$, $\kappa_{i+1}\subset N_i$, $\sup (N_i\cap\kappa_{i+2})=_{def} \delta_i\in \kappa_{i+2}\cap cof(\kappa_{i+1})$, ${}^{\kappa_i} N_i\subset N_i$.
We arrange that $\lambda,f, \langle \kappa_i : i<\eta\rangle \in N_0$.

We will recursively build $\langle A_i: i<\eta\rangle$ such that $A_i\subset N_i\cap \kappa_{i+2}$ and $|A_i|=\kappa_{i}^+$ satisfying:

\emph{for all $j\geq i$, $A_i\cup \{\delta_j\}$ is a rainbow subset of $f$.}

Recursively, suppose $A_k\subset N_k\cap \kappa_{k+2}$ for $k<i$ have been built. Let $A^*=\bigcup_{k<i} A_k \subset \kappa_{i+1}\subset N_i$. Notice that $|A^*|\leq \kappa_{i}$. We will enlarge $A^*$ with $\kappa_{i}^+$ many elements in $\delta_i-\kappa_{i+1}$. More precisely, we will find $C=\{\alpha_k\in \delta_i -\kappa_{i+1}  :  k<\kappa_{i}^+\}$ such that $A^*\cup C \cup \{\delta_j\}$ is a rainbow subset of $f$ for all $j\geq i$. We finish by setting $A_i=A^*\cup C$.

Suppose we have built $C_\nu=\{\alpha_k: k<\nu\}$ for some $\nu<\kappa_{i}^+$ satisfying the requirement. Since ${}^{\kappa_i} N_i\subset N_i$, we have $A^*\cup C_\nu \in N_i$.
Let $A(A^*\cup C_\nu)=_{def}\{\gamma<\kappa_{i+2}: A^*\cup C_\nu \cup \{\gamma\} \text{ is a rainbow subset for }f\}$. Since $A(A^*\cup C_\nu)\in N_i$ and $\delta_i\in A(A^*\cup C_\nu)$,  we know that $A(A^*\cup C_\nu)$ is a stationary subset of $\kappa_{i+2}$.

Let $B_j=_{def} \{\rho\in A(A^*\cup C_\nu): \exists \alpha\in A^*\cup C_\nu \ f(\alpha, \delta_j)=f(\rho, \delta_j)\}$ for each $j\geq i$. As $|A^*\cup C_\nu| \leq \kappa_{i}$ and the coloring is $\lambda$-bounded, we know that $|B_j|\leq \kappa_i$ for any $j\geq i$. Pick any $\gamma \in A(A^*\cup C_\nu)- \bigcup_{i\leq  j<\eta} B_j$ with $\gamma > \max A^*\cup C_\nu$. We claim that this $\gamma$ is as desired, namely $A^*\cup C_\nu\cup \{\gamma\}\cup \{\delta_j\}$ is a rainbow subset for all $j\geq i$. Indeed, fix some $j\geq i$. By the fact that $\gamma,\delta_j\in A(A^*\cup C_\nu)$, the only bad possibility is that for some $\alpha\in A^*\cup C_\nu$, $f(\alpha,\delta_j)=f(\gamma,\delta_j)$. But this is ruled out by the fact that $\gamma\not\in B_j$.

\end{proof}

\begin{remark}\label{Indestructible}
We can strengthen the conclusion of Lemma \ref{singularstronglimit} to that $\kappa\to^{poly} (\kappa)_{\lambda-bdd}^2$ for any $\lambda<\kappa$ and it remains true in any forcing extension satisfying $<\gamma$-covering property (see Definition \ref{covering}) for some cardinal $\gamma<\kappa$. The proof is similar to that of Theorem \ref{CountableCase}. Hence it is also possible for a singular cardinal which is not a strong limit to satisfy the conclusion of Lemma \ref{singularstronglimit}.
\end{remark}

\begin{remark}\label{FreeSetTheorem}
The following strengthening is also true: if $\kappa$ is a strong limit singular cardinal, then $\kappa\to^{poly} (\kappa)^n_{\lambda-bdd}$ for any $\lambda<\kappa$ and $n\in \omega$. This is an immediate consequence of the following theorem (Theorem 45.4 in \cite{MR795592}): given a strong limit singular cardinal $\kappa$ and some $\lambda<\kappa$, we have that for any $f: [\kappa]^n \to [\kappa]^{\lambda}$, there exists $H\subset \kappa$ of cardinality $\kappa$ such that for any $x\in [H]^2$, $f(x)\cap (H-x) =\emptyset$ (such $H$ is called \emph{$f$-free}). To see the implication, given a $g: [\kappa]^2\to \kappa$ which is $\lambda$-bounded, consider $f: [\kappa]^2\to [\kappa]^\lambda$ defined as $f(x)=\bigcup \{y: g(y)=g(x)\}-x$. We leave it to the reader to verify that any $f$-free set is a rainbow subset for $g$. However, the proof of Theorem 45.4 in \cite{MR795592} heavily uses the Erd\H{o}s-Rado theorem, it is thus not clear if the proof can be generalized to give the forcing indestructibility result as in Remark \ref{Indestructible}. We decide to keep the proof of a weaker result that entails generalizations.
\end{remark}

\begin{question}
If an inaccessible $\kappa$ carries a non-trivial $\kappa$-complete $\kappa$-saturated normal ideal, is it true that $\kappa\to^{poly}(\kappa)^n_{\lambda-bdd}$ for all $n\in \omega$ and all $\lambda<\kappa$?
\end{question}

\section{The extent of Rainbow Ramsey theorems at successors of singular cardinals}\label{singular}

In \cite{MR2354904} and \cite{MR2902230}, it is shown that if $\mathrm{GCH}$ holds, then $\kappa^+\to^{poly} (\eta)^2_{<\kappa-bdd}$ for any regular cardinal $\kappa$ and ordinal $\eta<\kappa^+$ and moreover the partition relations continue to hold in any $\kappa$-c.c. forcing extension. The authors ask what we can say when $\kappa$ is singular. We will address this question by showing $\mathrm{GCH}$ implies $\kappa^+\to^{poly} (\eta)^2_{<\kappa-bdd}$ for all $\eta<cf(\kappa)^+$ and $\square_\kappa$ implies $\kappa^+\not\to^{poly} (\eta)^2_{<\kappa-bdd}$ for all $\eta\geq cf(\kappa)^+$. For the latter, as we will see below, a weaker hypothesis suffices.

\begin{observation}\label{cofinalityObservation}
If $\kappa$ is singular of cofinality $\lambda<\kappa$, then $\kappa^+\not \to^{poly} (\lambda^+ +1)^2_{<\kappa-bdd}$.
\end{observation}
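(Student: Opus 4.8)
The plan is to exhibit a single $<\kappa$-bounded coloring $f:[\kappa^+]^2\to\kappa^+$ admitting no rainbow subset of order type $\lambda^++1$, where $\lambda=cf(\kappa)<\kappa$. The whole point is structural: a rainbow set $A$ of order type $\lambda^++1$ has a maximum element $\delta$ sitting above a set $B=A\cap\delta$ of order type $\lambda^+$, and for such an $A$ the partial coloring $f(\cdot,\delta)$ must in particular be injective on $B$. So I would design $f$ so that no slice $f(\cdot,\delta)$ can be injective on a set of size $\lambda^+$, while still keeping every color class of size $<\kappa$.

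First I would exploit the singularity of $\kappa$: since $cf(\kappa)=\lambda$, fix a partition $\kappa=\bigsqcup_{i<\lambda}J_i$ into $\lambda$ pieces each of size $<\kappa$ (e.g.\ let $J_i$ be the $i$-th block cut out by an increasing cofinal sequence in $\kappa$). For each $\delta<\kappa^+$ fix a bijection $b_\delta:\delta\to|\delta|$ and set $g_\delta(\beta)$ to be the unique $i<\lambda$ with $b_\delta(\beta)\in J_i$; since $|\delta|\le\kappa$, every fiber $g_\delta^{-1}\{i\}$ has size $\le|J_i|<\kappa$. Finally, fixing a bijection $\Phi:\lambda\times\kappa^+\to\kappa^+$, I define $f(\beta,\delta)=\Phi(g_\delta(\beta),\delta)$ for $\beta<\delta$.

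The two things to verify are then immediate. For $<\kappa$-boundedness, a color value $\Phi(i,\delta')$ pins down both coordinates, so $f^{-1}\{\Phi(i,\delta')\}=\{\{\beta,\delta'\}:\beta\in g_{\delta'}^{-1}\{i\}\}$, which has size $<\kappa$. For the nonexistence of a rainbow set, suppose $A$ were rainbow of order type $\lambda^++1$ with maximum $\delta$ and $B=A\cap\delta$ of size $\lambda^+$; rainbowness applied to the pairs through $\delta$ makes $\beta\mapsto\Phi(g_\delta(\beta),\delta)$ injective on $B$, whence $g_\delta\restriction B$ is injective, and so $g_\delta$ embeds the $\lambda^+$-sized set $B$ into $\lambda$ --- impossible. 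There is no genuine obstacle here; the only delicate point is the bookkeeping that simultaneously keeps the color classes small (which requires the $\lambda$ pieces $J_i$ to be individually of size $<\kappa$) and forces the fatal pigeonhole (which requires there to be only $\lambda<\lambda^+$ pieces), and both are supplied precisely by the relation $\lambda=cf(\kappa)$.
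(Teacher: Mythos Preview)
Your proof is correct and follows essentially the same approach as the paper's. The only cosmetic difference is that you pull back a single fixed partition $\kappa=\bigsqcup_{i<\lambda}J_i$ via bijections $b_\delta$, whereas the paper simply picks, for each $\beta<\kappa^+$, an arbitrary partition $\beta=\bigsqcup_{n<\lambda}A_{\beta,n}$ into pieces of size $<\kappa$; in both cases the coloring sends $(\alpha,\beta)$ to (the index of the piece containing $\alpha$, $\beta$), and the contradiction is the same pigeonhole on the top element of a would-be rainbow set of type $\lambda^++1$.
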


\begin{proof}
For each $\beta\in \kappa^+$, fix disjoint $\{A_{\beta, n}: n\in \lambda\}$ such that each set has size $<\kappa$ and $\bigcup_{n\in \lambda} A_{\beta, n} =\beta$.
Define a coloring by mapping $\{\alpha,\beta\}\in [\kappa^+]^2\mapsto (n,\beta)$ if $n$ is the unique element in $\lambda$ that $\alpha\in A_{\beta,n}$. This coloring is easily seen to be $<\kappa$-bounded.  For any subset $A$ of order type $\lambda^++1$, let $\delta$ be the top element. Now by pigeon hole, there exists $n\in \lambda$, such that $|A\cap A_{\delta,n}|\geq \lambda^+$. For any $\alpha<\beta\in A\cap A_{\delta,n}$, $f(\alpha,\delta)=(n,\delta)=f(\beta,\delta)$. Thus $A$ is not a rainbow subset.
\end{proof}

The following connects the rainbow partition relations with sets in Shelah's approachability ideal. Fix a singular cardinal $\kappa$ with cofinality $\lambda<\kappa$ for Definitions \ref{sing1} and \ref{sing2}.

\begin{definition}\label{sing1}
A set $S\subset \kappa^+$ is in $I[\kappa^+; \kappa]$ iff there is a sequence $\bar{a}=\langle a_\alpha \in [\kappa^+]^{<\kappa} : \alpha<\lambda\rangle$ and a closed unbounded $C\subset \kappa^+$ such that for any $\delta\in C\cap S$ is singular and weakly approachable with respect to the sequence $\bar{a}$, namely there is an unbounded $A\subset \delta$ of order type $cf(\delta)$ such that any $\alpha<\delta$ there exists $\beta<\delta$ with $A\cap \alpha\subset a_\beta$.
\end{definition}

Notice that $I[\kappa^+; \kappa]$ contains $I[\kappa^+]$, which is Shelah's approachability ideal. For more details on these matters, see \cite{MR2768694}.

\begin{definition}[Definition 3.24, 3.25 \cite{MR2768694}]\label{sing2}
$d: [\kappa^+]^2\to cf(\kappa)$ is 

\begin{enumerate}
\item \emph{normal} if 
$$ i<cf(\kappa) \rightarrow \sup_{\alpha<\kappa^+} |\{\beta<\alpha: d(\beta,\alpha)<i\}|<\kappa,$$
\item \emph{transitive} if for any $\alpha<\gamma<\beta<\kappa^+$, $d(\alpha,\beta)\leq \max \{d(\alpha,\gamma), d(\gamma,\beta)\}$,
\item \emph{approachable} on $S\subset \lim \kappa^+$ if for any $\delta\in S$, there is a cofinal $A\subset \delta$ such that for any $\alpha\in A$, $\sup \{d(\beta,\alpha): \beta\in A\cap \alpha\}<cf(\kappa)$.
\end{enumerate}

\end{definition}

It is a consequence of Theorem 3.28 in \cite{MR2768694} that $\kappa^+\cap cof(\lambda^+)\in I[\kappa^+; \kappa]$ implies the existence of a normal $d$ that is approachable on $E\cap \kappa^+\cap cof(\lambda^+)$ for some club $E\subset \kappa^+$.

\begin{lemma}
For a singular cardinal $\kappa$ with cofinality $\lambda<\kappa$, we have that $\kappa^+\cap cof(\lambda^+)\in I[\kappa^+; \kappa]$ implies $\kappa^+\not\to^{poly} (\lambda^+)^2_{<\kappa-bdd}$.
\end{lemma}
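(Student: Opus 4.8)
The plan is to use the hypothesis $\kappa^+ \cap cof(\lambda^+) \in I[\kappa^+;\kappa]$ to extract, via the cited consequence of Theorem 3.28 in \cite{MR2768694}, a normal transitive coloring $d:[\kappa^+]^2 \to cf(\kappa) = \lambda$ that is approachable on $E \cap \kappa^+ \cap cof(\lambda^+)$ for some club $E \subset \kappa^+$. The goal is to manufacture from $d$ a single $<\kappa$-bounded coloring $f:[\kappa^+]^2 \to \kappa^+$ such that no subset of order type $\lambda^+$ is rainbow. The obstruction to a rainbow set of type $\lambda^+$ should come from a point $\delta \in E \cap cof(\lambda^+)$ sitting at the top (or as a limit) of the purported rainbow set: approachability of $d$ at $\delta$ supplies a cofinal $A \subset \delta$ along which $d$ stays bounded below some fixed $i < \lambda$, and the plan is to exploit this to force a repeated color.

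First I would recall from Observation \ref{cofinalityObservation} the template: a coloring is $<\kappa$-bounded precisely when each color class has size $<\kappa$, and the way to produce repeated colors on large sets is to pigeonhole on a small index set (there of size $\lambda$). Here $d$ itself takes values in $\lambda = cf(\kappa)$, so $d$ is far from injective on any large set; the issue is only that $d$ is not $<\kappa$-bounded as a coloring into $\kappa^+$ in the required sense, and a monochromatic-for-$d$ set need not obstruct rainbowness directly. So I would combine $d$ with a normality/splitting device. Concretely, for each $\beta < \kappa^+$ and each $i < \lambda$, the normality of $d$ gives $|\{\gamma < \beta : d(\gamma,\beta) < i\}| < \kappa$; I would use this to partition, for fixed $\beta$, the predecessors according to the value $d(\cdot,\beta)$ together with an auxiliary bookkeeping coordinate, and define $f(\alpha,\beta) = (d(\alpha,\beta), \beta)$, or a refinement thereof. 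One checks this $f$ is $<\kappa$-bounded: for fixed color $(i,\beta)$ the preimage is contained in $\{\alpha<\beta : d(\alpha,\beta)=i\}$, which by normality (applied at $i+1$) has size $<\kappa$.

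The heart of the argument is then to show $f$ admits no rainbow set of order type $\lambda^+$. Suppose $X \subset \kappa^+$ has order type $\lambda^+$. I would first argue, by intersecting with the club $E$ and using that $cof(\lambda^+) \cap \kappa^+$ is where approachability lives, that we may locate a point $\delta$ that is a limit of $X$ with $cf(\delta)=\lambda^+$ and $\delta \in E$ (for instance, if $X$ itself has a top element of the right cofinality, or by passing to $\sup X$ or a limit point; this is where I must be careful about whether $X$ has type exactly $\lambda^+$ versus $\lambda^+ + 1$, matching the statement's target $(\lambda^+)^2$). At such $\delta$, approachability yields a cofinal $A \subset \delta$ and, along $A$, a uniform bound $\sup\{d(\beta,\alpha):\beta \in A \cap \alpha\} < \lambda$ for each $\alpha \in A$. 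Since $|A \cap X|$ is large (cofinal of size $\lambda^+$) while $d$ takes fewer than $\lambda$ relevant values with $\delta$ as second coordinate on an approaching sequence, transitivity of $d$ lets me bound $d$-values between pairs of $X$ below $\delta$ by a single $i<\lambda$; pigeonholing $\lambda^+$ points into $\lambda$ color-classes of $d$ then produces $\alpha_0 < \alpha_1$ in $X$ with $d(\alpha_0,\delta') = d(\alpha_1,\delta')$ for an appropriate $\delta' \in X$ above them, hence $f(\alpha_0,\delta') = f(\alpha_1,\delta')$, contradicting rainbowness. The main obstacle I anticipate is getting the repeated color to occur on a pair whose \emph{top} element lies in $X$ (so that it genuinely violates injectivity of $f \restriction [X]^2$) rather than only relative to the external witness $\delta$: this requires using transitivity to transfer the bounded-$d$ behavior from "$\gamma$ versus $\delta$" to "$\gamma$ versus $\gamma'$" for $\gamma,\gamma'\in X$, and then a careful pigeonhole on the triple structure to realize the collision strictly inside $X$.
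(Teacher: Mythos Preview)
Your proposal is correct and follows essentially the same route as the paper: define $f(\alpha,\beta)=(d(\alpha,\beta),\beta)$, use normality of $d$ for $<\kappa$-boundedness, and then combine approachability, transitivity, and pigeonhole to produce a collision inside any set of order type $\lambda^+$. The paper resolves the obstacle you flag exactly as you anticipate---it interleaves the given set with the approachability witness $B$ via $b_i<a_i<b_{i+1}$, pigeonholes on the values $d(b_i,a_i)$ and $d(a_i,b_{i+1})$, and then uses transitivity through the $b$'s to bound $d(a_i,a_j)$ uniformly by some $\eta^*<\lambda$, so that any $\delta\in A'$ with $|A'\cap\delta|=\lambda$ yields the collision; your worry about locating $\delta\in E$ is sidestepped by defining $f$ only on $[E]^2$ from the outset (so $\sup A\in E\cap cof(\lambda^+)$ automatically).
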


\begin{proof}
Fix a normal $d$ that is approachable at $E\cap \kappa^+\cap cof(\lambda^+)$ for some club $E\subset \kappa^+$. Define $f: [ E]^2\to \kappa^+$ such that $f(\alpha,\beta)=(d(\alpha,\beta),\beta)$. The normality of $d$ implies $f$ is $<\kappa$-bounded. Given $A\in [E]^{\lambda^+}$, let $\gamma=\sup A$. Then $d$ is approachable at $\gamma$. Fix some unbounded $B\subset \gamma$ of order type $\lambda^+$ witnessing the approachability of $d$. We may assume there exists $\eta_0<\lambda$ such that $\sup d''[B]^2 \leq \eta_0$. To see why we can do this, note that by the approachability assumption on $d$, we know for each $\alpha\in B\cap \gamma$, $\eta_\alpha'=\sup \{d(\beta, \alpha): \beta\in B\cap \alpha\}<\lambda$. Find $B'\in [B]^{\lambda^+}$ and $\eta_0<\lambda$ such that for any $\alpha\in B'$, $\eta_\alpha'=\eta_0$. It is clear that $\sup d''[B']^2 \leq \eta_0$. Without loss of generality, we may assume $B'=B$.

Pick the following increasing sequences $\langle a_i\in A: i<\lambda^+\rangle$ and $\langle b_i \in B : i<\lambda^+\rangle$ satisfying that for all $i<\lambda^+$, $b_i<a_i<b_{i+1}$. By the Pigeon Hole principle, we can find $D\in [\lambda^+]^{\lambda^+}$ and some $\eta_1\in \lambda$ such that for all $i\in D$, $d(b_i, a_i), d(a_i, b_{i+1})\leq \eta_1$. Then for any $i<j\in D$, by the transitivity of $d$, we have $d(a_i,a_j)\leq \max \{d(a_i, b_{i+1}), d(b_{i+1}, b_j), d(b_j, a_j)\}\leq \max\{\eta_0, \eta_1\}=_{def} \eta^*$ (here we use the convention that $d(t,t)=0$). Let $A'=\{a_i: i\in D\}$.

Pick $\delta\in A'$ such that $A'\cap \delta$ has size $\lambda$. We know $\sup d(\cdot, \delta)'' A'\cap \delta \leq \eta^*<\lambda$, which clearly implies there exist $\alpha_0<\alpha_1\in A'\cap \delta$ such that $d(\alpha_0, \delta)=d(\alpha_1,\delta)$. In particular, $A$ is not rainbow for $f$.
\end{proof}

\begin{remark}
$\square_\kappa$ implies $I[\kappa^+]$ is trivial. Hence  $\square_\kappa$ implies $\kappa^+\not\to^{poly}(cf(\kappa)^+)^2_{<\kappa-bdd}$.
\end{remark}

In light of the preceding theorems, the following theorem is the best possible in a sense.

\begin{definition}\label{covering}
A forcing poset $\mathbb{P}$ satisfies $<\kappa$-covering property if for any $\mathbb{P}$-name of subset of ordinals $\dot{B}$ such that $\Vdash_{\mathbb{P}} |\dot{B}|<\kappa$, there exists $B\in V$ such that $|B|<\kappa$ and $\Vdash_{\mathbb{P}} \dot{B}\subset B$.
\end{definition}

Notice that if $\kappa$ is singular, then $\kappa$ and $\kappa^+$ are preserved as cardinals in any forcing extension satisfying $<\kappa$-covering property.

\begin{theorem}\label{CountableCase}
Fix a singular cardinal $\kappa$ with $\lambda=\mathrm{cf}(\kappa)<\kappa$. Suppose $\kappa^{<\lambda}=\kappa$. Then for any $\alpha<\lambda^+$,  
\begin{equation}
\kappa^+\to^{poly} (\alpha)^2_{<\kappa-bdd}.
\end{equation}
Moreover, these partition relations continue to hold in any forcing extension by $\mathbb{P}$ satisfying the $<\kappa$-covering property.
\end{theorem}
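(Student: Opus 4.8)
The plan is to reduce to a \emph{normal} coloring and then run a guided elementary-submodel construction in the spirit of Lemma \ref{singularstronglimit}, organised so that each individual choice only has to defeat fewer than $\mathrm{cf}(\kappa)$ color classes. First I would normalize: given a $<\kappa$-bounded $f\colon[\kappa^+]^2\to\kappa^+$, a recursion of length $\kappa^+$ (using $<\kappa$-boundedness and that each partial set has size $\le\kappa<\kappa^+$) produces $B\in[\kappa^+]^{\kappa^+}$ with $f\restriction[B]^2$ normal, so without loss of generality $f$ is normal. The point of normality is that a set $X$ is rainbow iff for every $\beta\in X$ the column $f(\cdot,\beta)$ is injective on $X\cap\beta$, since pairs with distinct tops automatically receive distinct colors. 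Thus it suffices to build an increasing $\langle x_\xi:\xi<\alpha\rangle$ so that for each $\xi$ the column $f(\cdot,x_\xi)$ is injective on $\{x_\eta:\eta<\xi\}$. Fix a $<\mathrm{cf}(\kappa)$-closed $M\prec H(\theta)$ of size $\kappa$ with $f\in M$ and $\kappa+1\subseteq M$, chosen so that $\mathrm{otp}(M\cap\kappa^+)=\kappa$ (hence every tail of $M\cap\kappa^+$ has size $\kappa$); put $\delta=\sup(M\cap\kappa^+)$. Such $M$ exists by $\kappa^{<\mathrm{cf}(\kappa)}=\kappa$, and $\delta$ plays the role of the ``large ordinal'' guide as in the proofs of Lemma \ref{singularstronglimit} and Theorem \ref{saturatedproof}.

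The engine is one step. Suppose $X=\{x_\eta:\eta<\xi\}\in M$ is rainbow with $\delta\in\mathrm{Inj}(X):=\{\gamma:f(\cdot,\gamma)\restriction X\text{ is injective}\}$. Since $\mathrm{Inj}(X)\in M$ contains $\delta=\sup(M\cap\kappa^+)$, the reflection argument of Theorem \ref{saturatedproof} shows $\mathrm{Inj}(X)$ is unbounded in $\kappa^+$ and that $\mathrm{Inj}(X)\cap M$ meets every tail of $M\cap\kappa^+$ in a set of size $\kappa$. To keep $\delta$ a legal guide after the next point, I must choose $x_\xi\in\mathrm{Inj}(X)\cap M$ above $\sup X$ with $f(x_\xi,\delta)\notin\{f(x_\eta,\delta):\eta<\xi\}$; the forbidden set is $\bigcup_{\eta<\xi}\{\gamma:f(\gamma,\delta)=f(x_\eta,\delta)\}$, a union of $|\xi|$ color classes each of size $<\kappa$ by $<\kappa$-boundedness. \textbf{This is where $\mathrm{cf}(\kappa)=\lambda$ is used decisively:} as long as the \emph{frontier} $X$ has size $<\lambda$, this is a union of fewer than $\mathrm{cf}(\kappa)$ sets of size $<\kappa$, hence of size $<\kappa$, and removing it from the size-$\kappa$ tail leaves a legal $x_\xi\in M$. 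This already yields every order type $\le\lambda$ (for $\xi<\lambda$ the frontier has size $<\lambda$), and for order type $\lambda+1$ one simply appends $\delta$, which by construction is a rainbow top over the whole $\lambda$-sequence.

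The hard part, and the main obstacle, is passing through frontiers of size $\lambda=\mathrm{cf}(\kappa)$: once $|X|\ge\lambda$ the forbidden set is a union of $\lambda$ color classes and may have size $\kappa$, so brute avoidance against a single guide breaks down — and this is genuine, since with $<\kappa$-bounded $f$ a single pair $\{a,b\}$ can collide ($f(a,\gamma)=f(b,\gamma)$) on a cofinal set of $\gamma$; this is exactly the phenomenon behind the threshold $\mathrm{cf}(\kappa)^+$ in Observation \ref{cofinalityObservation}. To reach an arbitrary $\alpha<\lambda^+$ I would therefore \emph{never} maintain injectivity of a single guide over a set of size $\ge\lambda$. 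Instead I argue by induction on $\alpha<\lambda^+$: write $\alpha=\sum_{k<\mathrm{cf}(\alpha)}\beta_k$ with $\mathrm{cf}(\alpha)\le\lambda$ and each $\beta_k<\alpha$, and build the $k$-th block (of order type $\beta_k$, available by the induction hypothesis) above the earlier blocks along a tower $\langle M_k:k<\mathrm{cf}(\alpha)\rangle$ with guides $\delta_k=\sup(M_k\cap\kappa^+)$. Two facts make the blocks cohere. For \emph{storage}: the earlier blocks lie below $\delta_{k-1}$, and since $M_{k-1}\in M_k$ gives $\delta_{k-1}\subseteq M_k$ while the length-$k$ sequence of earlier models (with $k<\lambda$) lies in $M_k$ by $<\lambda$-closure, the whole accumulated set is an \emph{element} of $M_k$ even though it has size $\lambda$, so the reflection step still applies. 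For \emph{compatibility with the large accumulated part}: rather than brute avoidance, I choose $M_k$ (hence $\delta_k$) with $\delta_k$ lying in the stationary set $\mathrm{Inj}(\text{earlier blocks})$, so $\delta_k$'s column is injective over all previous blocks by choice, while inside block $k$ the frontier of \emph{new} points stays of size $<\lambda$, keeping each per-step forbidden set of size $<\kappa$ exactly as in the engine above. Finally, for the \emph{moreover} I would rerun this in a forcing extension $V[G]$ by a $\mathbb{P}$ with the $<\kappa$-covering property (Definition \ref{covering}): normalization is internal to $V[G]$, the guiding models are taken in $V$, and covering is used to capture each size-$<\kappa$ forbidden set and each $<\kappa$ color class of the (possibly new) coloring by a ground-model set of size $<\kappa$, so that the ground-model guide still reflects; as $\mathbb{P}$ preserves $\kappa$ and $\kappa^+$, the same recursion goes through. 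I expect the block-coherence at cofinality-$\lambda$ limit stages — simultaneously securing storage and the stationary choice of the next guide over a size-$\lambda$ accumulated set — to be the most delicate point to write out carefully.
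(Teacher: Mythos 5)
Your reduction to normal colorings, your single‑guide ``engine'' for order types up to $\lambda+1$, and your use of the covering property all match the paper. But the step you yourself flag as delicate --- passing a frontier of size $\lambda$ --- is a genuine gap, and the device you propose does not close it. You want to choose the next guide $\delta_k$ ``in the stationary set $\mathrm{Inj}(\text{earlier blocks})$,'' but once the accumulated set $Y$ has size $\lambda=\mathrm{cf}(\kappa)$ there is no reason for $\mathrm{Inj}(Y)$ to be stationary, or even nonempty above $\sup Y$: it is the intersection of $\lambda$ many sets of the form $\{\gamma: f(a,\gamma)\neq f(b,\gamma)\}$, each of whose complements can be stationary --- you note this collision phenomenon yourself. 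Largeness of $\mathrm{Inj}(Y)$ cannot be extracted after the fact from an arbitrary rainbow $Y$; it has to be engineered while $Y$ is being built, against guides fixed in advance, and that is exactly the difficulty rather than something you can defer to the choice of $M_k$. Moreover, even granting a good $\delta_k$, your within‑block step must keep $f(\cdot,\delta_k)$ injective on $Y\cup\{\text{new points}\}$, so each new point must avoid $\bigcup_{y\in Y}f(\cdot,\delta_k)^{-1}\{f(y,\delta_k)\}$, a union of $\lambda$ many color classes each of size $<\kappa$, which can have size $\kappa$; the bound ``the frontier of new points has size $<\lambda$'' does not control this union, since the $\lambda$ many old points of $Y$ contribute to it as well.

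The paper avoids ever having a frontier of size $\lambda$ by a reordering trick that is the one idea missing from your proposal: fix a bijection $g:\lambda\to\alpha$ and an $\in$-chain of models $M_l$ ($l<\alpha$) with $\kappa_l=M_l\cap\kappa^+$ and ${}^{<\lambda}M_l\subseteq M_{l+1}$, and run a recursion of length $\lambda$ (not $\alpha$), placing the $\beta$-th chosen point into the block $[\kappa_{g(\beta)},\kappa_{g(\beta)+1})$, one point per block. Read off in increasing order the final set has type $\alpha$, but at every stage of the recursion only $|\beta|<\lambda$ points exist in total, so every forbidden set --- collisions at the guides $\kappa_{g(i)+1}$ already in play and at the already-chosen points lying above the current block --- is a union of fewer than $\mathrm{cf}(\kappa)$ color classes and hence of size $<\kappa$, and injectivity of the guides' columns is kept as an explicit construction invariant rather than recovered by reflection afterwards. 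If you insist on a block-by-block induction you would at minimum need to strengthen the induction hypothesis so that each block comes equipped with a stationary set of legal future guides, and you would still face the union-of-$\lambda$-classes problem above; the bijection trick sidesteps both obstacles at once.
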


\begin{proof}
We may assume $|\alpha|=\lambda$.
Fix a $\mathbb{P}$-name for a $<\kappa$-bounded coloring $\dot{f}$ on $[\kappa^+]^2$. We may assume it is normal.
Fix some large enough regular cardinal $\chi$. Build a sequence $\langle M_i \prec (H(\chi),\in, \dot{f}, \kappa, \mathbb{P}): i<\alpha\rangle$ such that 
\begin{enumerate}
\item $\kappa+1\subset M_i$, $|M_i|=\kappa$, $\kappa_i =_{def} M_i\cap \kappa^+ \in \kappa^+$,
\item $|\kappa_{i+1}-\kappa_i|=\kappa$,
\item ${}^{<\lambda} M_{i}\subset M_{i+1}$.
\end{enumerate}
The construction is possible since $\kappa^{<\lambda}=\kappa$. Fix a bijection $g: \lambda\to \alpha$. We will inductively define a rainbow subset $\{ a_i: i<\lambda\}$ such that $a_i\in \kappa_{g(i)+1}-\kappa_{g(i)}$. It is clear that this set as defined will have order type $\alpha$.
During the construction, we maintain the following \emph{construction invariant}: 

\emph{for any $i<\lambda$ and $l=g(i)$, whenever $a_j,a_k<\kappa_{l+1}$, we have $\Vdash_\mathbb{P} \dot{f}(a_j, \kappa_{l+1})\neq \dot{f}(a_k,\kappa_{l+1})$}.

Suppose for some $\beta<\lambda$ we have defined $A=\{ a_i: i<\beta\}$. Let  $l=g(\beta)$ and $B=\kappa_{l+1}-\kappa_{l}$. Our goal is to find an element in $B$ such that after we augment $A$ with this element, not only does the set remains a rainbow subset, but also the construction invariant is satisfied. Let $C=\{\delta<\kappa^+: \forall i,j<\beta \ a_i,a_j\in A\cap \kappa_{l+1}\rightarrow \Vdash_{\mathbb{P}} \dot{f}(a_i,\delta)\neq \dot{f}(a_j,\delta)\}$ and $B'=B\cap C$. 

\begin{claim}
$|B'|=\kappa$.
\end{claim}
\begin{proof}[Proof of the claim]
Let $A'=A\cap M_{l+1}=A\cap \kappa_{l+1}\subset M_l$. As ${}^{<\lambda}M_l\subset M_{l+1}$ we have $A'\in M_{l+1}$.
Hence  $C\in M_{l+1}$ and that $\kappa_{l+1}\in C$ by the construction invariant. $C$ is thus a stationary subset of $\kappa^+$. In particular, $M_{l+1}\models $ there exists an injection from $\kappa$ to $C$. As $\kappa+1\subset M_{l+1}$, $B\cap C=B'$ has size $\kappa$.
\end{proof}

We want to pick an element from $B'$ and add it to the set, however, we need to make sure the set is rainbow and satisfy the construction invariant. 
For any cardinal $\delta$, let $A\restriction \delta$ be $A\cap (<\delta)$.
For the purpose of presentation, work in $V[G]$ for some $G\subset \mathbb{P}$ generic over $V$.

Let $B_{-1}=\{\delta\in B': \exists a\in A\restriction \kappa_{l+1} \ f(\delta,\kappa_{l+1})=f(a,\kappa_{l+1})\}$. For each $i<\beta$ with $g(i)>l$, let $B_i=\{\delta\in B': \exists \alpha\in A\restriction \kappa_{g(i)+1} \ f(\alpha,\kappa_{g(i)+1})=f(\delta,\kappa_{g(i)+1})\}$ and $B'_i=\{\delta\in B': \exists \alpha\in A\restriction a_i \ f(\alpha, a_i)=f(\delta,a_i)\}$. We verify that these sets as defined all have size $<\kappa$.

Suppose for the sake of contradiction that $B_{-1}$ has size $\kappa$, then since $|A|<\kappa$ and $|B'|=\kappa$, there exists $a\in A$ such that $\{\delta\in B': f(a,\kappa_{l+1})=f(\delta,\kappa_{l+1})\}$ has size $\kappa$. This contradicts with the assumption that $f$ is $<\kappa$-bounded.

Suppose for the sake of contradiction that for some $i$ with $i<\beta$ and $g(i)>l$ we have $|B_i|=\kappa$, similar to the above, we can find $a\in A$ such that $\{\delta\in B': f(a, \kappa_{g(i)+1})=f(\delta,\kappa_{g(i)+1})\}$ has size $\kappa$, contradicting with $<\kappa$-boundedness. Similarly $|B_i'|<\kappa$.

Back in $V$, pick $\mathbb{P}$-names for the sets above: $\dot{B}_{-1}$, $\dot{B}_i, \dot{B}'_i$ for all $i<\beta$ such that $g(i)>l$. By the $<\kappa$-covering property of $\mathbb{P}$, we can find $B_{-1}^*, B_i^*, (B_i')^*$ of size $<\kappa$ in $V$ such that $\Vdash_{\mathbb{P}} \dot{B}_{-1}\subset B_{-1}^*, \dot{B}_i\subset B_i^*, \dot{B}'_i\subset (B'_i)^*$ for all $i<\beta$ with $g(i)>l$. Since $\beta<\lambda=cf(\kappa)$, we know $|B_{-1}^*\cup \bigcup_{i<\beta, g(i)>l} B_i^*\cup (B_i')^*|<\kappa$.
Pick $a_\beta\in B'-B_{-1}^*-\bigcup_{i<\beta, g(i)>l} B_i^*\cup (B_i')^*$. Then it follows that $A\cup \{a_\beta\}$ is forced by $\mathbb{P}$ to be a rainbow subset and to satisfy the construction invariant.

\end{proof}

An immediate consequence of the proof of Theorem \ref{CountableCase} is: 

\begin{cor}
For any cardinal $\kappa$ and any $\alpha<\omega_1$, 
\begin{equation}
\kappa^+\to^{poly} (\alpha)^2_{<\kappa-bdd}.
\end{equation}
\end{cor}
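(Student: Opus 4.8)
The plan is to rerun the proof of Theorem \ref{CountableCase} verbatim, with the singular cofinality $\lambda=\mathrm{cf}(\kappa)$ replaced throughout by $\omega$ and with $\mathbb{P}$ taken to be the trivial forcing (the corollary is a ZFC statement, so the $<\kappa$-covering clause becomes vacuous). The guiding observation is that every place in that proof where the standing hypotheses ``$\kappa$ singular'', ``$\mathrm{cf}(\kappa)=\lambda$'' and ``$\kappa^{<\lambda}=\kappa$'' were used specializes, upon setting $\lambda=\omega$, to a statement that holds for \emph{every} infinite cardinal $\kappa$. The finite case of $\kappa$ is degenerate, so I treat $\kappa$ as infinite.

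Concretely, I would first fix a large regular $\chi$ and build an $\in$-increasing chain $\langle M_i\prec H(\chi):i<\omega\rangle$ containing $f$ as a parameter, with $\kappa+1\subset M_i$, $|M_i|=\kappa$, $\kappa_i=_{def}M_i\cap\kappa^+\in\kappa^+$ and $|\kappa_{i+1}-\kappa_i|=\kappa$. The closure requirement ``${}^{<\omega}M_i\subset M_{i+1}$'' is automatic, since elementary submodels are closed under finite sequences; correspondingly the arithmetic hypothesis $\kappa^{<\lambda}=\kappa$ becomes $\kappa^{<\omega}=\kappa$, which holds for every infinite $\kappa$. Fixing a bijection $g:\omega\to\alpha$ (possible as $\alpha$ is countable), I would then recursively construct a rainbow set $\{a_i:i<\omega\}$ with $a_i\in\kappa_{g(i)+1}-\kappa_{g(i)}$, maintaining the same construction invariant as in Theorem \ref{CountableCase}; the resulting set has order type $\alpha$.

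The one step where $\mathrm{cf}(\kappa)=\lambda$ was genuinely exploited is the final counting, where one forms a union of $<\lambda$ many sets each of size $<\kappa$ and concludes the union still has size $<\kappa$. With $\lambda=\omega$, at each stage $\beta<\omega$ this is a union of only \emph{finitely} many sets of size $<\kappa$, whose union is therefore of size $<\kappa$ for any infinite cardinal $\kappa$, independently of $\mathrm{cf}(\kappa)$. Hence $a_\beta$ can always be chosen from $B'$ outside these finitely many forbidden sets, and the recursion never gets stuck.

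The step I expect to require the most care is checking that the Claim ``$|B'|=\kappa$'' survives when $\kappa$ is allowed to be regular. That claim is proved by pure elementary-submodel reflection, using none of the singularity of $\kappa$: since $A'=A\cap\kappa_{l+1}$ is finite it lies in $M_{l+1}$, so $C\in M_{l+1}$, and the construction invariant forces $\kappa_{l+1}\in C$, whence $C$ is stationary in $\kappa^+$ and $M_{l+1}\models|C\cap(\kappa_l,\kappa^+)|=\kappa^+$ (note $\kappa_l\in M_{l+1}$ as the chain is $\in$-increasing). As $\kappa+1\subset M_{l+1}$, an injection $\kappa\to C\cap(\kappa_l,\kappa^+)$ lying in $M_{l+1}$ has its range inside $C\cap(\kappa_l,\kappa_{l+1})=B\cap C=B'$, so $|B'|=\kappa$. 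This argument appeals only to the size and closure properties of the $M_i$, never to $\kappa$ being singular, so it carries over to an arbitrary infinite $\kappa$. This is precisely why the corollary follows from the \emph{proof}, and not merely the statement, of Theorem \ref{CountableCase}.
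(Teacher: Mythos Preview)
Your approach is correct and is exactly what the paper intends by ``an immediate consequence of the proof of Theorem~\ref{CountableCase}'': replace $\lambda$ by $\omega$, take $\mathbb{P}$ trivial, and observe that every appeal to singularity, to $\kappa^{<\lambda}=\kappa$, or to $\mathrm{cf}(\kappa)=\lambda$ becomes automatic when $\lambda=\omega$. One slip to fix: the chain $\langle M_i\rangle$ must be indexed by $i<\alpha$ (as in the original proof), not by $i<\omega$, since you subsequently refer to $\kappa_{g(i)}$ and $\kappa_{g(i)+1}$ with $g(i)$ ranging over $\alpha$; this is plainly a transcription error and the remainder of your verification goes through unchanged.
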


\begin{question}
Is $\kappa^+\to^{poly} (\omega_1)^2_{<\kappa-bdd}$ consistent for some singular $\kappa$ of countable cofinality?
\end{question}

\section{A coloring that is strongly proper indestructible but c.c.c destructible}\label{indestructible}

It is proved in \cite{MR2354904} that if $CH$ holds, then $\omega_2\to^{poly} (\eta)^2_{<\omega_1-bdd}$ for any $\eta<\omega_2$. In \cite{MR2902230}, a model where $2^\omega=\omega_2$ and $\omega_2\not\to^{poly} (\omega_1)_{2-bdd}^2$ is constructed. A question regarding the possibility of getting $\omega_2\not\to^{poly} (\omega_1)_{2-bdd}^2$ along with continuum larger than $\omega_2$ was raised. A positive answer was given in \cite{MR3437648} using the method of forcing with symmetric systems of submodels as side conditions.

In this section we give a simpliflied construction of the model presented in \cite{MR2902230} using the framework developed by Neeman \cite{MR3201836} and show the witness to $\omega_2\not\to^{poly} (\omega_1)_{2-bdd}^2$ in that model is indestructible under strongly proper forcings. This provides an alternative answer to the original question.

\begin{definition}[Special case of Definition 2.2 and 2.4 in \cite{MR3201836}]
Let $K=(H(\omega_2), <^*)$ where $<^*$ is some well-ordering of $H(\omega_2)$.
Define \emph{small nodes} and \emph{transitive nodes} respectively as 
$$\mathcal{S}=_{def}\{M \in [K]^\omega:  M\prec K\}$$ and
 $$\mathcal{T}=_{def}\{W\in [K]^{\omega_1} : W\prec K\text{ and internally approachable of length }\omega_1\}.$$ Here $W\prec K$ is \emph{internally approachable of length $\omega_1$} if there exists a continuous $\subseteq$-increasing countable sequence $\langle W_i \prec K: i<\omega_1\rangle$ such that $W=\bigcup_{i<\omega_1} W_i$ and for all $i<\omega_1$, $\langle W_j: j\leq i\rangle\in W_{i+1}$.
 
Both sets are stationary in $K$ respectively.
Let $\mathbb{P}=\mathbb{P}_{\omega,\omega_1,\mathcal{S}, \mathcal{T}}$ be the standard sequence poset consisting of models of two types. More precisely, $\mathbb{P}$ consists of finite increasing $\in$-chain of elements in $\mathcal{S}\cup \mathcal{T}$ closed under intersection. For example, a typical element will look like $\{s_0, s_1, \cdots, s_{k-1}\}\subset \mathcal{S}\cup \mathcal{T}$, where for any $i<k-1$, $s_i\in s_{i+1}$ and for any $i,j<k$, there is some $l<k$ satisfying $s_i\cap s_j=s_l$. Notice that we can either think of a condition as a finite sequence or as a finite set, since the elements in a condition can be naturally ordered by their Von Neumann ranks. Thus, given a condition $s$ and $M, M'\in s$, we say $M$ \emph{precedes/is before (succeeds/is after)} $M'$ when the rank of $M$ is smaller (greater) than the rank of $M'$.
\end{definition}

\begin{remark}\label{elaborate}
In order to consolidate the reader's understanding of the notion, we point out the following: 
\begin{enumerate}
\item If $M_0\in \mathcal{S}, M_1\in \mathcal{S}\cup \mathcal{T}$ and $M_0\in M_1$, then $M_0\subset M_1$. However, if $W\in \mathcal{T}$ and $M\in \mathcal{S}$ satisfy $W\in M$, it cannot be the case that $W\subset M$. Hence, in general the membership relation $\in$ restricted on a condition in $\mathbb{P}$ is not transitive.
\item If $W\in \mathcal{T}$ and $M\in \mathcal{S}$ satisfy that $W\in M$, then $W\cap M\in \mathcal{S}\cap W$. Let $\langle W_i: i<\omega\rangle$ be the $<^*$-least sequence witnessing that $W$ is internally approachable of length $\omega_1$. Let $M\cap \omega_1=\delta$. We claim that $M\cap W=W_\delta$. On the one hand, we have $\langle W_i: i<\omega\rangle\in M$ which implies $\bigcup_{i<\delta}W_i= W_\delta\subset M$. On the other hand, suppose $x\in M\cap W$, then if $i_x\in \omega_1$ is the least such that $x\in W_{i_x}$, we know $i_x\in M$ by elementarity. Hence $i_x\in \delta$, which implies $x\in W_\delta$.
\item If $s\in \mathbb{P}$ and $W\in s\cap \mathcal{T}$, then any $M\in s$ preceding $W$, $M\in W$. If $M_0, M_1\in s\cap \mathcal{S}$ such that there is no transitive node in $s$ between $M_0$ and $M_1$, then $M_0\in M_1$.
\end{enumerate}
\end{remark}

It is not necessary for a reader to be familiar with \cite{MR3201836} in order to understand the following proof since we will list all the lemmas needed.

\begin{claim}[Claim 2.17, 2.18]\label{residuegap}
Fix $s\in \mathbb{P}$ and $Q\in s$. Define $res_Q(s)=s\cap Q$. Then \begin{enumerate}
\item $res_Q(s)\in \mathbb{P}$.
\item If $Q$ is a transitive node, then $res_Q(s)$ consists of all nodes of $s$ that occur before $Q$. If $Q$ is a small node, then $res_Q(s)$ consists of all nodes in $s$ that occur before $Q$ and do not belong to any interval $[Q\cap W, W)\cap s$ for any transitive node $W\in s$. Those intervals are called \emph{residue gaps} of $s$ in $Q$.
\end{enumerate}

\end{claim}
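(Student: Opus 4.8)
The plan is to deduce part (1) from the characterization in part (2), so I would prove (2) first. Throughout I identify a condition with the finite sequence obtained by ordering its nodes by Von Neumann rank, and I record at the outset the one fact that needs no case analysis: $res_Q(s)$ is closed under intersection. Indeed, if $M,M'\in s\cap Q$, then $M\cap M'\in s$ because $s$ is closed under intersection, and $M\cap M'\in Q$ because $Q\prec K$ is closed under the (definable) binary intersection operation; hence $M\cap M'\in s\cap Q$. Finiteness being trivial, part (1) then reduces to showing that the nodes singled out in (2) form an $\in$-chain when listed by rank, which I defer until the characterization is in hand (the non-transitivity of $\in$ on a condition, noted in Remark~\ref{elaborate}, is exactly why this needs (2)).

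For part (2) with $Q$ a transitive node the argument is short. If $M\in s$ and $M\in Q$ then $\mathrm{rank}(M)<\mathrm{rank}(Q)$, so $M$ precedes $Q$; conversely, if $M\in s$ precedes $Q$ then $M\in Q$ by Remark~\ref{elaborate}(3). Thus $res_Q(s)$ is exactly the set of nodes of $s$ preceding $Q$, i.e. an initial segment of the chain, and initial segments are visibly conditions (consecutive nodes stay consecutive, and any intersection of two nodes has rank below both, hence below $\mathrm{rank}(Q)$). This also settles (1) in the transitive case.

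The substance is the case $Q\in\mathcal{S}$, where I would prove the biconditional ``$N\in Q$ iff $N$ lies in no residue gap'' for nodes $N$ preceding $Q$ by downward induction on $\mathrm{rank}(N)$. First note only transitive $W\in s$ preceding $Q$ can contribute a relevant gap: if $W$ succeeds $Q$ then $Q\in W$, so $Q\subseteq W$ by Remark~\ref{elaborate}(1), whence $Q\cap W=Q$ and $[Q\cap W,W)$ meets no node before $Q$. For a transitive $W\in Q$ one has, by Remark~\ref{elaborate}(2), that $Q\cap W=W_\delta$ with $\delta=Q\cap\omega_1$, that $Q\cap W\in\mathcal{S}\cap W$, and crucially that $Q\cap W\in s$ by intersection-closure; moreover $\delta$ is the \emph{same} ordinal for every such $W$ and is preserved on passing to the traces (since $\omega_1\subseteq W$). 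The ``gap implies not in $Q$'' direction is then direct: a node $N$ in $[Q\cap W,W)$ precedes $W$, so $N\in W$ by Remark~\ref{elaborate}(3), while $\mathrm{rank}(N)\ge\mathrm{rank}(Q\cap W)$ forces $N\notin Q\cap W=W_\delta$, and since $N\in W$ this gives $N\notin Q$. For the reverse direction I run the induction through the immediate successor $N^{+}$ of $N$: when $N^{+}=Q$, or $N^{+}$ is a small node already known to lie in $Q$, the relation $N\in N^{+}\subseteq Q$ yields $N\in Q$; when $N^{+}$ is a small node equal to a trace $Q\cap W=W_\delta$, one still has $N\in N^{+}=W_\delta\subseteq Q$, so membership in $Q$ \emph{propagates along the chain} rather than being read off from rank; and when $N^{+}$ is a transitive node in $Q$, a rank count shows $N$ necessarily falls into $[Q\cap N^{+},N^{+})$, consistent with $N\notin Q$.

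The main obstacle is bookkeeping the interaction of \emph{nested} transitive nodes: when $N^{+}$ itself sits inside a gap of a larger transitive $W'$ (so $W\in W'$, hence $W\subseteq W'$), I must check that $[Q\cap W,W)$ and $[Q\cap W',W')$ fit together so the induction still classifies $N$ correctly. Here I would exploit that all relevant traces share the ordinal $\delta=Q\cap\omega_1$ (so $Q\cap W$, $W_\delta\cap W'$, etc. agree), that $Q\cap W\subseteq Q\cap W'$, and that each trace is again a node of $s$; this keeps the union of gaps and the ``$\subseteq$-descent'' from $Q$ through the successive traces exactly complementary among the nodes below $Q$. Once the biconditional is in place, part (1) follows: the nodes of $res_Q(s)$ are precisely those reached by this descent, and consecutive such nodes are $\in$-related either directly (consecutive in $s$) or through a trace $Q\cap W\in s$ bridging a gap, so Remark~\ref{elaborate}(1) delivers the $\in$-chain, completing the verification that $res_Q(s)\in\mathbb{P}$.
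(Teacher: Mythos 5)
Your proof is correct and follows the same route the paper indicates: the paper does not actually prove this claim but imports it from Neeman (Claims 2.17--2.18 of \cite{MR3201836}), and the accompanying remark sketches exactly the decomposition you carry out --- closure under intersection is immediate from elementarity of $Q$ together with closure of $s$, the inclusion $[Q\cap W,W)\cap s\cap Q=\emptyset$ is the easy half, and the converse (every node of $s$ before a small $Q$ but outside $Q$ falls into a residue gap) is the part requiring the induction along the chain through the traces $Q\cap W$. Your downward induction via immediate successors, together with the rank computations showing that a predecessor of a transitive node in $Q$ lands in $[Q\cap N^{+},N^{+})$ and that gaps propagate, is a sound reconstruction of that argument.
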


\begin{remark}
Do not confuse $res_Q(s)$ with the set of nodes in $s$ preceding $Q$. The point of (1) is that the part of information about $s$ that is captured by $Q$ is itself a legitimate condition. The fact that $res_Q(s)$ is closed under intersection is immediate since $s$ is. It takes a little work to show it forms an $\in$-increasing chain.

The second part of the claim describes what $res_Q(s)$ looks like in a very concrete way. For any $W\in \mathcal{T}\cap s$, $[Q\cap W, W)\cap s \cap Q$ must be empty. It takes more work to show that any $M\in s-Q$, there exists one such residue gap containing $M$.
\end{remark}

\begin{lemma}[Corollary 2.31 in \cite{MR3201836}]\label{strongproperness}
Let $s\in \mathbb{P}$ and $Q\in s$. For any $t\in \mathbb{P}\cap Q$ such that $t\leq res_Q(s)=_{def}s\cap Q\in \mathbb{P}$. Then 
\begin{enumerate}
\item $s$ and $t$ are directly compatible, namely the closure of $s\cup t$ under intersection is a common lower bound. Moreover, if $Q$ is a transitive node, then $s\cup t$ is already closed under intersection hence is the lower bound for $s$ and $t$.
\item If $r$ is the closure of $s\cup t$, then $res_Q(r)=t$.
\end{enumerate}
\end{lemma}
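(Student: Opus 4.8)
The final statement is Lemma \ref{strongproperness} (cited as Corollary 2.31 in \cite{MR3201836}): given $s\in\mathbb{P}$, a node $Q\in s$, and $t\in\mathbb{P}\cap Q$ with $t\leq res_Q(s)$, we must show (1) that $s$ and $t$ are \emph{directly} compatible---i.e. the closure $r$ of $s\cup t$ under intersection is a common lower bound, with $s\cup t$ already closed when $Q$ is transitive---and (2) that $res_Q(r)=t$. The plan is to verify, for the set $r=\mathrm{cl}(s\cup t)$, the three defining properties of a condition in $\mathbb{P}$: that it is a finite $\in$-chain, that it is closed under intersection, and that every element lies in $\mathcal{S}\cup\mathcal{T}$; closure and finiteness are automatic, so the content is showing $r$ is linearly ordered by $\in$ (equivalently by Von Neumann rank) and that the new intersection points it introduces are genuine nodes.

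\textbf{Closure and the new intersections.}
First I would observe that since $t\subseteq Q$ and every element of $t$ has rank below $Q$, while the elements of $s$ split into those of rank $<\mathrm{rank}(Q)$ and those of rank $\geq\mathrm{rank}(Q)$, the only possible failures of linear order occur among nodes below $Q$. Here I would invoke the description of $res_Q(s)$ from Claim \ref{residuegap}(2). When $Q$ is transitive, $res_Q(s)$ is exactly the initial segment of $s$ before $Q$, and since $t\leq res_Q(s)$ refines that initial segment inside $Q$, every node of $t$ is comparable (by $\in$) to every node of $s$: a node of $t$ lies in $Q$ and hence precedes $Q$ and all later $s$-nodes, and its relation to the early $s$-nodes is governed by the refinement $t\leq s\cap Q$. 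This gives that $s\cup t$ is already an $\in$-chain, and closure under intersection adds nothing new because both $s$ and $t$ are separately closed and cross-intersections $M\cap N$ with $M\in s$, $N\in t$ reduce (using $N\subseteq Q$ and transitivity of $\in$ on the comparable pair) to intersections already present; this yields the ``moreover'' in (1). When $Q$ is small, the residue gaps $[Q\cap W,W)\cap s$ of Claim \ref{residuegap}(2) are the obstruction: nodes of $t$ need not be $\in$-comparable to the transitive nodes $W\in s$ lying above $Q$, so I would need to check that the genuinely new intersection $Q\cap W$ (and more generally $N\cap W$ for $N\in t$) is itself a legitimate node. This is where Remark \ref{elaborate}(2) enters: for $W\in\mathcal{T}$ and a small node $N\in W$ one has $N\cap W\in\mathcal{S}\cap W$, so these cross-intersections land back in $\mathcal{S}$ and the closure stays inside $\mathcal{S}\cup\mathcal{T}$. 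The main obstacle, and the step I expect to carry the real weight, is precisely this small-node case: verifying that after closing $s\cup t$ under intersection the result is $\in$-linearly ordered, using the residue-gap structure to place each $t$-node correctly relative to the transitive nodes of $s$ and confirming each newly created intersection point is an element of $\mathcal{S}\cup\mathcal{T}$.

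\textbf{Computing the residue.}
For part (2), once $r=\mathrm{cl}(s\cup t)$ is known to be a condition, I would compute $res_Q(r)=r\cap Q$ directly and show it equals $t$. The inclusion $t\subseteq r\cap Q$ is immediate since $t\subseteq s\cup t\subseteq r$ and $t\subseteq Q$. For the reverse inclusion I would argue that any node of $r$ lying in $Q$ is either a node of $t$ or a cross-intersection $M\cap N$ with $M\in s$, $N\in t$; since $N\subseteq Q$ forces $M\cap N\subseteq Q$ and, by the linear-order analysis above, $M\cap N$ already coincides with a node of $t$ (as $t$ refines $res_Q(s)$ and is itself closed under intersection), no node outside $t$ survives in $r\cap Q$. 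Combining the two inclusions gives $res_Q(r)=t$, completing the lemma.
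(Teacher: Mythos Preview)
The paper does not give its own proof of this lemma: it is quoted as Corollary~2.31 from Neeman \cite{MR3201836}, and the only argument the paper supplies is the one-sentence Remark following the statement, namely ``first verify that $s\cup t$ consists of $\in$-increasing nodes, and then show that this property remains even after we add nodes of the form $M_0\cap M_1$ where $M_0\in s$, $M_1\in t$.'' Your proposal follows exactly this outline, so at the level of strategy it matches what the paper indicates.

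That said, your sketch contains a couple of inaccuracies worth flagging. In the small-$Q$ case you write that ``nodes of $t$ need not be $\in$-comparable to the transitive nodes $W\in s$ lying above $Q$,'' but this is backwards: if $W\in s\cap\mathcal{T}$ lies above $Q$ then $Q\in W$ by Remark~\ref{elaborate}(3), hence $Q\subset W$ by Remark~\ref{elaborate}(1), and since $t\in Q\subset W$ every node of $t$ is an element of $W$ and therefore precedes $W$. The genuine obstruction is the comparability of $t$-nodes with the $s$-nodes sitting in the residue gaps $[Q\cap W,W)\cap s$ (which precede $Q$ in rank yet lie outside $Q$), and that is where the cross-intersections actually arise. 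Your argument for part~(2) also has a gap: the assertion that each cross-intersection $M\cap N$ ``already coincides with a node of $t$'' is not correct in general, since closing $s\cup t$ may create genuinely new small nodes. The correct reason $res_Q(r)=t$ is that any such new node, as well as any node of $s\setminus Q$, lands in a residue gap of $r$ relative to $Q$ and hence is excluded from $r\cap Q$; you would need to verify this rather than claim the new nodes were already present in $t$.
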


\begin{remark}
An immediate corollary is that that if $Q\in \mathcal{S}$ contains $s\in Q\cap \mathbb{Q}$, then the closure of $s\cup \{M\}$ under intersection is a greatest lower bound of $s$ containing $Q$. The basic idea of the proof is that: first verify that $s\cup t$ consists of $\in$-increasing nodes, and then show that this property remains even after we add nodes of the form $M_0\cap M_1$ where $M_0\in s, M_1\in t$.
\end{remark}

For each $\beta<\omega_2$, let $f_\beta$ be the $<^*$-least injection from $\beta$ to $\omega_1$. Define the main forcing $\mathbb{Q}$ to consist of $p=(c_p, s_p)$ such that: 
\begin{enumerate}
\item $c_p$ is a finite partial function from $[\omega_2]^2\to \omega_1$ satisfying the \emph{bounding requirement}, namely there do not exist $\alpha_0<\alpha_1<\alpha_2<\beta$ such that $(\alpha_i,\beta)\in dom(c_p)$ for all $i<3$ and $c_p(\alpha_0,\beta)=c_p(\alpha_1,\beta)=c_p(\alpha_2,\beta)$;
\item for any $(\alpha,\beta)\in dom(c_p)$, $c_p(\alpha,\beta)\geq f_\beta(\alpha)$;
\item $s_p\in \mathbb{P}$;
\item if $(\alpha,\beta)\in dom(c_p)$ and $M\in s_p$ contains $(\alpha,\beta)$, then $c_p(\alpha,\beta)\in M$.
\end{enumerate}

$q\leq_{\mathbb{Q}} p$ iff $c_q\restriction dom(c_p)=c_p$ and $s_q \supset s_p$.

\begin{claim}\label{enlarge}
For any $\alpha<\beta<\omega_2$ and $p\in \mathbb{Q}$, there exists $p'\leq p$ such that $(\alpha,\beta)\in dom(c_{p'})$.
\end{claim}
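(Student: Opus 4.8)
The plan is to leave the side-condition part untouched, putting $s_{p'}=s_p$, and to extend only the coloring part by assigning a single color to the new pair $(\alpha,\beta)$. If $(\alpha,\beta)$ already lies in $dom(c_p)$ we take $p'=p$, so assume not. Then I would set $c_{p'}=c_p\cup\{\langle(\alpha,\beta),c\rangle\}$ for a value $c<\omega_1$ to be chosen, and check that $p'=(c_{p'},s_p)\leq p$ is a legitimate condition. The ordering is clear once $p'\in\mathbb{Q}$, and clauses (1)--(4) in the definition of $\mathbb{Q}$ for the old pairs of $dom(c_p)$ are unaffected, so everything reduces to choosing $c$ so that the four defining clauses hold for the single new pair.

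The color $c$ is subject to exactly three demands. First, the bounding requirement (clause (1)) forbids creating a monochromatic triple, which can only happen within column $\beta$; since $c_p$ is finite and already satisfies the bounding requirement, only finitely many colors appear twice on pairs of the form $(\cdot,\beta)$, and these are the only values of $c$ that would produce such a triple. Thus clause (1) rules out only finitely many colors. Second, clause (2) demands $c\geq f_\beta(\alpha)$. Third, clause (4) demands $c\in M$ for every $M\in s_p$ with $(\alpha,\beta)\in M$. I would dispatch the transitive nodes immediately: if $W\in s_p\cap\mathcal{T}$, then $\omega_1\subseteq W$, so any ordinal $c<\omega_1$ lies in $W$ automatically and $W$ imposes no constraint. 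For a small node $M\in s_p\cap\mathcal{S}$, writing $\delta_M=M\cap\omega_1$ (a countable limit ordinal), the requirement $c\in M$ is simply $c<\delta_M$.

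The main point is that clauses (2) and (4) are compatible. Whenever a small node $M\in s_p$ contains $(\alpha,\beta)$, elementarity gives $\alpha,\beta\in M$, and hence the $<^*$-least injection $f_\beta\colon\beta\to\omega_1$ lies in $M$, so $f_\beta(\alpha)\in M\cap\omega_1=\delta_M$; that is, $f_\beta(\alpha)<\delta_M$. Setting $\delta^*=\min\{\delta_M:M\in s_p\cap\mathcal{S},\ (\alpha,\beta)\in M\}$ (and $\delta^*=\omega_1$ if there is no such $M$), we obtain $f_\beta(\alpha)<\delta^*$, and since $\delta^*$ is a limit ordinal the interval $[f_\beta(\alpha),\delta^*)$ is infinite. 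Choosing $c$ in this interval and outside the finitely many colors excluded by clause (1) satisfies (1), (2) and (4) simultaneously, so $p'$ is the desired extension.

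The only step requiring genuine care is this last compatibility check: a priori the lower bound $f_\beta(\alpha)$ demanded by clause (2) could exceed the upper bound $\delta^*$ demanded by clause (4), leaving no room for $c$. The elementarity observation $f_\beta(\alpha)\in M$ is exactly what closes this gap, and it relies on $f_\beta$ being defined in a $<^*$-definable (hence $K$-internal) way from $\beta$, which is precisely why the construction fixed $f_\beta$ as the $<^*$-least such injection in the first place.
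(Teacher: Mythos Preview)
Your proof is correct and follows essentially the same approach as the paper's: keep $s_{p'}=s_p$, set $\delta^*$ to be the least $M\cap\omega_1$ among small nodes $M\in s_p$ containing $(\alpha,\beta)$ (or $\omega_1$ if none exist), observe via elementarity that $f_\beta(\alpha)<\delta^*$, and choose $c$ in the infinite interval $[f_\beta(\alpha),\delta^*)$ avoiding a finite set. Your write-up is in fact more explicit than the paper's on two points---why transitive nodes impose no constraint (since $\omega_1\subseteq W$) and why $f_\beta(\alpha)\in M$ (since $f_\beta$ is $<^*$-definable from $\beta$)---both of which the paper leaves to the reader.
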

\begin{proof}
We may assume $(\alpha,\beta)\not \in dom(c_p)$. Let $\delta\leq\omega_1$ be the least such that there is some $M\in s\cap \mathcal{S}$ containing $(\alpha,\beta)$ such that $\delta=M\cap \omega_1$ if it exists, $\delta=\omega_1$ if no such node exists. In any case, we have $\delta$ is a limit ordinal and $f_\beta(\alpha)\in \delta$. Pick $\delta\backslash (f_\beta(\alpha)+1)$ which is not in $range(c_p)$. It is clear that $(c_p\cup (\{\alpha,\beta\},\gamma), s_p)$ is a desired extension. 
\end{proof}

\begin{definition}
Let $\lambda$ be a fixed regular cardinal, $P$ be a poset. Let $\mathcal{M}=(H(\lambda), \in , \cdots)$ be some countable extension of $(H(\lambda), \in )$. We say $P$ is strongly proper for $B$ where $B\subset \{M: M\prec \mathcal{M}\}$ if for any $M\in B$ and any $r\in M\cap P$, there exists $r'\leq r$ such that $r'$ is \emph{strongly $(M,P)$-generic}, namely for any $r''\leq r'$, there exists some $r^* \in M\cap P$ such that any $t\leq r^*$ with $t\in M$ is compatible with $r''$. We call such $r^*$ a \emph{reduct} of $r''$ on $M$ and for the rest of the section we will use $r''\restriction M$ to represent one such reduct of $r''$ on $M$.

$P$ is strongly proper if for all sufficiently large regular $\theta$, $P$ is strongly proper for a club subset of $\{M\in [H(\theta)]^\omega: M\prec H(\theta)\}$.
\end{definition}

\begin{claim}\label{StrongPropernessTransitive}
For any $p=(c_p, s_p)$ with a transitive node $W\in s_p$, if $t\leq (c_p\cap W, res_W(s_p))$ and $t\in W$, then $t$ and $p$ are compatible. In particular, $\mathbb{Q}$ is strongly proper for $\mathcal{T}$. 
\end{claim}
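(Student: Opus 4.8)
The plan is to produce an explicit common lower bound of $p$ and $t$, namely $q=(c_p\cup c_t,\ s_p\cup s_t)$, and verify it is a genuine condition below both. The side-condition coordinate is handled immediately: since $t\in W$ we have $s_t\in\mathbb{P}\cap W$, and $t\le (c_p\cap W, res_W(s_p))$ gives $s_t\le res_W(s_p)$, so Lemma \ref{strongproperness} applied with the \emph{transitive} node $Q=W$ yields that $s_p\cup s_t$ is already closed under intersection and is a common lower bound of $s_p$ and $s_t$ in $\mathbb{P}$. Thus clause (3) for $q$ is free. Before touching the coloring I would record two standing facts: first, since $t\in W$ and $c_t,s_t$ are finite, every element of $c_t$ and every node of $s_t$ lies in $W$, and each such node is in fact a \emph{subset} of $W$ (a set of size $\le\omega_1$ that is an element of $W$ is contained in $W$, because $\omega_1\subseteq W$ and $W$ sees a surjection onto it); in particular $dom(c_t)\subseteq W$, while colours, being countable ordinals, always lie in $W$. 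Second, because $t$ extends $(c_p\cap W,\dots)$, the functions $c_p$ and $c_t$ agree wherever both are defined, so $c_p\cup c_t$ is a function and $q\le p,t$ on the coloring coordinate, and clause (2) passes to the union pointwise.

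The two substantive clauses are (4) and the bounding requirement (1). For clause (4) I would run a short case analysis on the node $M$ and its position relative to $W$. The cases where $(\alpha,\beta)\in dom(c_p)$ and $M\in s_p$, or $M\in res_W(s_p)\subseteq s_t$, or $M\succeq W$ (so either $W\subseteq M$ and the colour, a countable ordinal, lies in $M$, or $M=W$), are routine. The one genuinely non-immediate case is a pair of $c_t$ witnessed by a \emph{small} node $M\in s_p$ occurring after $W$: here $W\in M$, so by Remark \ref{elaborate}(2) $W\cap M\in\mathcal{S}\cap W$, whence $W\cap M\in s_p\cap W=res_W(s_p)\subseteq s_t$ by Claim \ref{residuegap}; since $(\alpha,\beta)\in M\cap W=W\cap M$, clause (4) for $t$ forces the colour into $W\cap M\subseteq M$.

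The main obstacle, and where I expect the real content, is the bounding requirement. Fixing $\beta$ and a colour $\gamma<\omega_1$, I would bound by $2$ the set of $\alpha<\beta$ with $(\alpha,\beta)$ coloured $\gamma$ in $c_p\cup c_t$. If $\beta\notin W$, then no pair of $c_t$ lies below $\beta$ (as $dom(c_t)\subseteq W$), so only $c_p$ contributes and its own bounding requirement finishes the case. If $\beta\in W$, the crux is an \emph{$f_\beta$-absorption} argument showing every $\alpha$ with $c_p(\alpha,\beta)=\gamma$ already lies in $W$: by clause (2), $f_\beta(\alpha)\le\gamma$, and since $\beta\in W$ forces $f_\beta\in W$, the set $\{\alpha'<\beta: f_\beta(\alpha')\le\gamma\}$ is a \emph{countable} set definable from parameters in $W$, hence a subset of $W$; as $\alpha$ lies in it, $\alpha\in W$. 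Consequently such a pair has both coordinates in $W$, so it lies in $c_p\cap W\subseteq c_t$ with the same colour, and the $c_p$-contribution is absorbed into the $c_t$-contribution, whose size is at most $2$ by the bounding requirement on $t$. This is exactly where the design choices---clause (2) together with the least injections $f_\beta$---pay off, and it is the step I would expect to require the most care.

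Finally, for the ``in particular'' clause, given $W\in\mathcal{T}$ and $r\in W\cap\mathbb{Q}$ I would set $r'=(c_r,\ s_r\cup\{W\})$; this is a condition below $r$ because every node of $s_r$ is an element and subset of $W$ (so $s_r\cup\{W\}$ is a legal side condition, closed under intersection since $W\cap M=M$ for $M\in s_r$) and the colours of $c_r$ lie in $\omega_1\subseteq W$, so clause (4) holds at $W$. To see $r'$ is strongly $(W,\mathbb{Q})$-generic, for $r''\le r'$ I take the reduct $r''\restriction W=(c_{r''}\cap W,\ res_W(s_{r''}))$, which is a condition in $W\cap\mathbb{Q}$ by Claim \ref{residuegap} and the remarks above; then the main part of the claim, applied with $p=r''$ (which has the transitive node $W\in s_{r''}$), shows that any $t\le r''\restriction W$ with $t\in W$ is compatible with $r''$, which is precisely the genericity requirement.
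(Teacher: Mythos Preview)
Your proposal is correct and follows essentially the same approach as the paper: form $q=(c_p\cup c_t,\ s_p\cup s_t)$, use Lemma~\ref{strongproperness} for the side-condition part, and handle clause~(4) via the reduction $M\cap W\in s_p\cap W\subseteq s_t$ in the nontrivial case. The one notable difference is that you spell out the $f_\beta$-absorption argument for the bounding requirement, whereas the paper simply declares it ``clear'' here (the same trick is written out explicitly only in the proof of Claim~\ref{StrongPropernessCountable} for small nodes); your version is thus more self-contained at this point.
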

\begin{proof}
Implicitly in the statement of the claim, $(c_p\cap W, res_W(s_p))$ can be easily checked to be a condition.
It is left to check that $r=(c_t\cup c_p, s_p\cup s_t)$ is a condition since it is clear that it extends $t$ and $p$. First note that $s_p\cup s_t \in \mathbb{P}$ and $\leq_\mathbb{P}$-extends $s_p$ and $s_t$ by Lemma \ref{strongproperness}. It is also clear that $c_t\cup c_p$ is a function that satisfies the bounding requirement. We are left with checking condition (4) as in the definition of $\mathbb{Q}$. Given $(\alpha,\beta)\in dom(c_r)$ and $M\in s_r$, if $(\alpha,\beta)\in M$, we need to show $c_r(\alpha,\beta)\in M$. Since $t$ and $p$ are conditions, the following cases are what we need to check: 
\begin{itemize}
\item $(\alpha,\beta)\in dom(c_p)-dom(c_t)$ and $M\in s_t-s_p$,
\item $(\alpha,\beta)\in dom(c_t)-dom(c_p)$ and $M\in s_p-s_t$.
\end{itemize}
If $(\alpha,\beta)\in dom(c_p)-dom(c_t)$, $(\alpha,\beta)\not \in W$ so $(\alpha,\beta)\not \in M$ for any $M\in s_t$ as $t\in W$ and $W$ is transitive. If $(\alpha,\beta)\in dom(c_t)-dom(c_p)$ and $M\in s_p-s_t$ containing $(\alpha,\beta)$, then $M\cap W\in s_p\cap W\subset s_t$. As $t$ is a condition, we have $c_t(\alpha,\beta)\in M\cap W\subset M$.

To see $\mathbb{Q}$ is strongly proper for $\mathcal{T}$, it suffices to notice that for any $W\in \mathcal{T}$ and $t=(c_t,s_t)\in W\cap \mathbb{Q}$, there exists $t'=(c_t, s_t')\leq t$ such that $W\in s_t'$ by Lemma \ref{strongproperness}.
\end{proof}

\begin{claim}\label{StrongPropernessCountable}
 For any countable $M^*\prec H(\lambda)$ for some large enough regular $\lambda$ containing $\mathbb{Q}, K$, if $r\in \mathbb{Q}$ satisfies that $M^*\cap K\in s_r$, then $r$ is strongly $(M^*, \mathbb{Q})$-generic. In particular, $\mathbb{Q}$ is strongly proper.
\end{claim}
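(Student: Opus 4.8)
The plan is to establish strong $(M^*,\mathbb{Q})$-genericity of $r$ by producing, for any given extension $r''\leq r$, a reduct $r''\restriction M^*$ lying in $M^*$ such that every condition $t\in M^*\cap \mathbb{Q}$ below this reduct is compatible with $r''$. Writing $M=M^*\cap K\in s_r\cap \mathcal{S}$, the natural candidate for the reduct is the "restriction to $M$" of $r''$, namely the pair $(c_{r''}\restriction M,\ res_M(s_{r''}))$, where $c_{r''}\restriction M$ denotes $c_{r''}$ restricted to those pairs $(\alpha,\beta)\in \mathrm{dom}(c_{r''})$ that lie in $M$, and $res_M(s_{r''})=s_{r''}\cap M$ is the residue from Claim~\ref{residuegap}. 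First I would verify that this pair is itself a condition and belongs to $M^*$: closure under intersection and the $\in$-chain property of $res_M(s_{r''})$ come from Claim~\ref{residuegap}(1), the bounding requirement is inherited, and condition~(4) is inherited on the nodes surviving in the residue; membership in $M^*$ follows because $M^*$ is elementary and $res_M(s_{r''}),\, c_{r''}\restriction M$ are definable from parameters in $M^*$ (using that $M\in s_r\subseteq s_{r''}$ and that everything relevant is coded below $M\cap \omega_1$).

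Next I would take an arbitrary $t=(c_t,s_t)\in M^*\cap \mathbb{Q}$ with $t\leq r''\restriction M$ and show $t$ is compatible with $r''$. The side-condition parts are handled by the strong properness of $\mathbb{P}$: since $t\leq res_M(s_{r''})$ and $t\in M$, Lemma~\ref{strongproperness} gives that $s_{r''}$ and $s_t$ are directly compatible, their closure $s^*$ under intersection being a common lower bound with $res_M(s^*)=s_t$. The remaining and main point is that the coloring parts $c_{r''}$ and $c_t$ are compatible, i.e.\ that $r=(c_{r''}\cup c_t,\ s^*)$ is a genuine condition. Here I must check three things: that $c_{r''}\cup c_t$ is a well-defined function (the two agree on the overlap $\mathrm{dom}(c_{r''})\cap \mathrm{dom}(c_t)$, which lies in $M$ because $t\in M^*$ so $\mathrm{dom}(c_t)\subseteq M$, and there $c_t$ extends $c_{r''}\restriction M$); that the bounding requirement survives; and that condition~(4) holds for the new nodes introduced by intersection.

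The main obstacle, and the step I expect to require the most care, is verifying the bounding requirement for the union $c_{r''}\cup c_t$: I must rule out a monochromatic triple $\alpha_0<\alpha_1<\alpha_2<\beta$ with equal color, where the three pairs are split between $\mathrm{dom}(c_{r''})\setminus M$ and $\mathrm{dom}(c_t)\subseteq M$. The key structural fact is that for any $(\alpha,\beta)\in \mathrm{dom}(c_{r''})\setminus M$ we have $\beta\notin M$ (since if $\beta\in M$ and $\alpha<\beta$, elementarity and condition~(4) would force the pair and its color into $M$), so a triple at a common top $\beta$ cannot mix a pair from $c_t$ (whose top lies in $M$) with a pair from $c_{r''}\setminus M$ unless $\beta\in M$; and if $\beta\in M$ then all three pairs below it that are colored lie in $M$, hence all in $c_t\cup (c_{r''}\restriction M)\subseteq c_t$, contradicting that $c_t$ alone satisfies bounding. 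A symmetric argument handles a pure $c_{r''}$ triple. Finally, condition~(4) for an intersection node $M_0\cap M_1$ follows the pattern of Claim~\ref{StrongPropernessTransitive}: any colored pair in such a node descends to the smaller of the two factors where the color was already correctly placed. The "in particular" clause that $\mathbb{Q}$ is strongly proper then follows because for any countable $M^*\prec H(\lambda)$ in a club and any $r_0\in M^*\cap \mathbb{Q}$, adding $M^*\cap K$ to $s_{r_0}$ (legitimate by Lemma~\ref{strongproperness}, applied with $M^*\cap K$ as a small node) yields an extension $r$ of the required form, which is strongly $(M^*,\mathbb{Q})$-generic by what was just shown.
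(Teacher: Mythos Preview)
Your overall strategy matches the paper's, but your verification of the bounding requirement contains a genuine error. You claim that for any $(\alpha,\beta)\in\mathrm{dom}(c_{r''})\setminus M$ one has $\beta\notin M$, justifying this by ``elementarity and condition~(4)''. This is false: $M$ is a countable elementary submodel of $H(\omega_2)$, so $M\cap\omega_2$ is not an initial segment, and it is entirely possible to have $\beta\in M$ while $\alpha\notin M$ (hence $(\alpha,\beta)\notin M$). Condition~(4) only says that \emph{if} $(\alpha,\beta)\in M$ then the color lies in $M$; it does not let you deduce $(\alpha,\beta)\in M$ from $\beta\in M$. Thus your argument does not rule out a monochromatic triple $\alpha_0<\alpha_1<\alpha_2<\beta$ where, say, $(\alpha_0,\beta)\in\mathrm{dom}(c_t)$ (so $\alpha_0,\beta\in M$) but $(\alpha_1,\beta),(\alpha_2,\beta)\in\mathrm{dom}(c_{r''})\setminus M$ with $\alpha_1,\alpha_2\notin M$.

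The paper closes exactly this gap using condition~(2) of the definition of $\mathbb{Q}$, which you never invoke. From one pair $(\alpha_i,\beta)\in M$ one gets $\beta\in M$ and the common color $\gamma\in M\cap\omega_1$; then $f_\beta\in M$ (it is the $<^*$-least injection, hence definable from $\beta$), and condition~(2) gives $f_\beta(\alpha_j)\leq\gamma$ for every $j<3$. Since $\gamma+1\subset M$ and $f_\beta$ is injective, each $\alpha_j=f_\beta^{-1}(f_\beta(\alpha_j))\in M$, forcing all three pairs into $\mathrm{dom}(c_t)$ and yielding the desired contradiction. Your treatment of condition~(4) is also too cursory: the paper requires an induction on the rank of the node $N$ through the residue-gap structure of Claim~\ref{residuegap}, which is genuinely more delicate than the transitive case in Claim~\ref{StrongPropernessTransitive} that you cite as the pattern.
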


\begin{proof}
Let $M=M^*\cap K$. We need to show for any $r'\leq r$, there exists $r'\restriction M\in M\cap \mathbb{Q}$ weaker than $r'$, such that any extension of $r'\restriction M$ in $M$ is compatible with $r'$. Let $r'\restriction M$ be $(c_{r'} \cap M,  res_M(s_{r'}))$. It is easy to see that $r'\restriction M$ is a condition weaker than $r'$.

Let $t\in \mathbb{Q}\cap M$ be such that such that $t\leq r'\restriction M$. As $s_t\leq res_M(s_{r'})$ and $s_t\in M$, we know by Lemma \ref{strongproperness} there exists $s^*\leq s_t, s_{r'}$ such that $res_M(s^*)=s_t$. Furthermore, we may assume $s^*$ is the closure of $s_{r'}\cup s_t$ under intersection. Let $h=_{def}(c_t\cup c_{r'}, s^*)$. We will check that $h$ is a condition.

First we check that $c_t\cup c_{r'}$ is a function that satisfies the bounding requirement. To see it is a function, let $(\alpha,\beta)\in dom(c_t)\cap dom(c_{r'})$, then $(\alpha,\beta)\in M$. Since $c_t\supset c_{r'}\restriction M$, we know $c_t(\alpha,\beta)=c_{r'}(\alpha,\beta)$. To see $c_t\cup c_{r'}$ is 2-bounded, suppose for the sake of contradiction, $\alpha_0<\alpha_1<\alpha_2<\beta$ are such that $c_h(\alpha_0,\beta)=c_h(\alpha_1,\beta)=c_h(\alpha_2,\beta)=\gamma\in \omega_1$. Note that there exists some $i<3$ such that $(\alpha_i, \beta)\in M$ since otherwise $(\alpha_k,\beta)\in dom(c_{r'})$ for all $k<3$, which contradicts with the fact that $r'$ is a condition. Also notice that $c_t(\alpha_i,\beta)=\gamma\in M$. By the requirement of a condition we know $f_\beta (\alpha_j)\leq \gamma$ for all $j<3$. But as $\gamma\in M$, $\gamma\subset M$, we know $\alpha_j\in M$ for all $j<3$. This means these three tuples are all in the domain of $c_t$. This is a contradiction to the fact that $t$ is a condition.

Finally we check condition (4) in the definition of $\mathbb{Q}$. Given $(\alpha,\beta)\in dom(h)$ and $N\in s_h$, if $(\alpha,\beta)\in N$, then we need to verify $c_h(\alpha,\beta)\in N$.
Recall that each element of $s_h$ is of the form $M_0\cap M_1$, $M_0$ or $M_1$ where $M_0\in s_{r'}, M_1\in s_t$. Hence, since $r'$ and $t$ are conditions, the cases we need to verify are: 
\begin{itemize}
\item $(\alpha,\beta)\in dom(c_{r'})-dom(c_{t})$ and $N\in s_{t}-s_{r'}$ and 
\item $(\alpha,\beta)\in dom(c_t)-dom(c_{r'})$ and $N\in s_{r'}-s_t$.
\end{itemize}

For $(\alpha,\beta)\in dom(c_{r'})-dom(c_t)$ and $N\in s_t\cap \mathcal{S}-s_{r'}$, we know $(\alpha,\beta)\not\in M$ since otherwise, it would have been in $dom(c_{r'}\restriction M)\subset dom(c_t)$. But $s_t\in M$ since $t\in M$, which implies $N\in M$. Hence it is impossible to have $(\alpha,\beta)\in N$.

For $(\alpha,\beta)\in dom(c_t)-dom(c_{r'})$ and $N\in s_{r'}\cap \mathcal{S}-s_t$ such that $(\alpha,\beta)\in N$, since $(\alpha,\beta)\in M$ and $s_{r'}$ is closed under intersection, we may assume $N\subset M$. If $N=M$, then we are done since $c_h(\alpha,\beta)=c_{t}(\alpha,\beta)\in M$. If $N\in M$, then we are done since $t\leq r'\restriction M$. So assume $N\not \in M$. By Claim \ref{residuegap}, $N$ occurs in a residue gap, namely there exists $W\in M\cap s_{r'}$ such that $N\in [W\cap M, W)=_{def} \{M'\in s_{r'}: rank(W\cap M)\leq rank(M') < rank(W)\}$. We will show $c_h(\alpha,\beta)\in N$ by inducting on the rank of $N$.
As $(\alpha,\beta)\in N\subset W$, $(\alpha,\beta)\in M\cap W$. Also $c_h(\alpha,\beta)\in M\cap W$. If there is no transitive node between $W\cap M$ and $N$, then we are done since $W\cap M\subset N$ (recall that $s_{r'}$ is linearly ordered by $\in$ and Remark \ref{elaborate}). Otherwise, there exists $W'\in [W\cap M, N)\cap \mathcal{T}$. Let $N'= W'\cap N$. Then $rank(N')<rank(N)$. Since $(\alpha,\beta)\in N'$, by the induction hypothesis, we know that $c_h(\alpha,\beta)\in N' \subset N$.

To see $\mathbb{Q}$ is strongly proper, for any condition $p$, for sufficiently large regular cardinal $\lambda$, we can find $M^*\prec H(\lambda)$ containing $p, K, \mathbb{Q}$. Then $p'=(c_p, cl(s_p\cup \{M^*\cap K\}))$ is a strongly $(M^*, \mathbb{Q})$-generic extension of $p$ by Lemma \ref{strongproperness}, where $cl(s_p\cup \{M^*\cap K\})$ denotes the closure of $s_p\cup \{M^*\cap K\}$ by intersection.

\end{proof}

By Claim \ref{StrongPropernessCountable} and Claim \ref{StrongPropernessTransitive}, $\omega_1$ and $\omega_2$ are preserved in the forcing extension by $\mathbb{Q}$.

\begin{lemma}[Lemma 4.3 of \cite{MR2902230}]\label{borrow}
For $\alpha_0<\alpha_1<\beta<\omega_2$ and $p\in \mathbb{Q}$, if $(\alpha_i,\beta)\not \in dom(c_p)$ for any $i<2$ and 

\begin{equation}
\forall M\in s_p \ (\alpha_0,\beta)\in M \Leftrightarrow (\alpha_1,\beta)\in M
\end{equation}
Then there exists an extension $p'=(c_{p'}, s_p)$ with the same side condition such that $(\alpha_0,\beta), (\alpha_1,\beta)\in dom(c_{p'})$ and $c_{p'}(\alpha_0,\beta)=c_{p'}(\alpha_1,\beta)$. Furthermore, we can ensure that $dom(c_{p'})=dom(c_p)\cup \{(\alpha_0,\beta),(\alpha_1,\beta)\}$.

\end{lemma}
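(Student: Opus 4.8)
The plan is to adapt the single-pair argument of Claim~\ref{enlarge}, the decisive point being that the hypothesis $(\alpha_0,\beta)\in M \Leftrightarrow (\alpha_1,\beta)\in M$ forces the two pairs to be ``visible'' to exactly the same nodes of $s_p$, so that one color $\gamma<\omega_1$ can be assigned to both while respecting clause~(4) in the definition of $\mathbb{Q}$. Concretely I would set $s_{p'}=s_p$ and $c_{p'}=c_p\cup\{((\alpha_0,\beta),\gamma),((\alpha_1,\beta),\gamma)\}$, so that the whole task reduces to a good choice of $\gamma$.

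First I would record the three constraints on $\gamma$. Clause~(2) requires $\gamma\geq f_\beta(\alpha_0),f_\beta(\alpha_1)$; the bounding requirement~(1) requires $\gamma\notin R_\beta:=\{c_p(\alpha',\beta):(\alpha',\beta)\in dom(c_p)\}$, since otherwise the witnessing pair $(\alpha',\beta)$ together with the two new pairs would be three pairs of color $\gamma$ in column $\beta$ (note $\alpha'\neq\alpha_0,\alpha_1$ as $(\alpha_i,\beta)\notin dom(c_p)$); and clause~(4) requires $\gamma\in M$ for every $M\in s_p$ containing $(\alpha_0,\beta)$, equivalently $(\alpha_1,\beta)$. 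Exactly as in Claim~\ref{enlarge}, I let $\delta$ be the least value of $M\cap\omega_1$ as $M$ ranges over the \emph{small} nodes of $s_p$ containing $(\alpha_0,\beta)$, setting $\delta=\omega_1$ if there is no such node; then $\delta$ is a limit ordinal, and since $\alpha_0,\alpha_1,\beta,f_\beta$ all lie in the minimizing $M$ by elementarity we get $f_\beta(\alpha_0),f_\beta(\alpha_1)<\delta$. I would then pick
\[
\gamma\in[\max\{f_\beta(\alpha_0),f_\beta(\alpha_1)\},\,\delta)\setminus R_\beta,
\]
which is possible because $\delta$ is a limit ordinal (the interval is infinite) while $R_\beta$ is finite.

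The crux is checking that this single $\gamma$ fulfills clause~(4) for \emph{both} new pairs at once, and this is precisely where the hypothesis is spent. Fix $M\in s_p$ containing $(\alpha_0,\beta)$ (equivalently $(\alpha_1,\beta)$). If $M$ is a transitive node then $\omega_1\subseteq M$ — internal approachability of length $\omega_1$ makes the ordinals $W_i\cap\omega_1$ strictly increasing, whence $M\cap\omega_1=\omega_1$ — so $\gamma\in M$; if $M$ is a small node then $M$ belongs to the family defining $\delta$, so $\gamma<\delta\leq M\cap\omega_1$, and as $M\cap\omega_1$ is an initial segment of $\omega_1$ contained in $M$ we again get $\gamma\in M$. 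The remaining verifications are routine: $c_{p'}$ adds only $(\alpha_0,\beta),(\alpha_1,\beta)$ to the domain (giving the ``furthermore'' clause), clause~(2) holds by the choice of $\gamma$, the bounding requirement is preserved because $\gamma\notin R_\beta$ makes $(\alpha_0,\beta),(\alpha_1,\beta)$ the only $\gamma$-colored pairs in column $\beta$, and $s_{p'}=s_p\in\mathbb{P}$ is untouched. Hence $p'=(c_{p'},s_p)\leq p$ is a condition of the required form. I expect no real obstacle beyond carefully invoking the structural facts of Remark~\ref{elaborate} (that $M\cap\omega_1$ is an initial segment of $\omega_1$, and that transitive nodes swallow $\omega_1$); the essential idea is that equal visibility of the two pairs lets one reuse a single Claim~\ref{enlarge}-style color for both.
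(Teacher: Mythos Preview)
Your proof is correct. The paper does not actually prove this lemma; it is imported verbatim from \cite{MR2902230} (hence the label ``Lemma 4.3 of \cite{MR2902230}'') and only used as a black box in the proof of Lemma~\ref{strengthen}. Your argument is the natural one: it is precisely the proof of Claim~\ref{enlarge} run once but applied simultaneously to both pairs, which is made possible by the hypothesis that the two pairs are seen by the same nodes of $s_p$. The verifications of clauses (1)--(4) are all sound, including the point that transitive nodes contain $\omega_1$ (your internal-approachability argument is fine; one could also note that in Neeman's setup transitive nodes of size $\omega_1$ with $\omega_1\in W\prec H(\omega_2)$ and the approachability chain force $\omega_1\subset W$ exactly as you say).
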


Building on the idea of Lemma 4.6 in \cite{MR2902230}, we prove a strengthened version in the following.

\begin{lemma}\label{strengthen}
In $V^{\mathbb{Q}}$, for any strongly proper forcing $\dot{P}$, $\Vdash_{\dot{P}}$ $c$ witnesses $\omega_2^V\not\to^{poly} (\omega_1)^2_{2-bdd}$.
\end{lemma}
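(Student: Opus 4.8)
The plan is to work in $V$ with the two-step iteration $\mathbb{Q} * \dot P$ and show it cannot force a rainbow subset of order type $\omega_1$ to appear. First I would fix the framing: the $2$-bounded coloring witnessing the negative relation is $d(\alpha,\beta)=(c(\alpha,\beta),\beta)$, which is $2$-bounded exactly because of the bounding requirement on conditions of $\mathbb{Q}$ (for each top $\beta$ and each color $\gamma\in\omega_1$, at most two pairs $(\alpha,\beta)$ get color $\gamma$). Under this identification a set $A$ is rainbow iff for every $\beta\in A$ the map $\alpha\mapsto c(\alpha,\beta)$ is injective on $A\cap\beta$; hence exhibiting $\alpha_0<\alpha_1<\beta$ in $A$ with $c(\alpha_0,\beta)=c(\alpha_1,\beta)$ destroys rainbowness. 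So it suffices to show that any name for a rainbow set of order type $\omega_1$ can be forced to contain such a triple.

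Suppose toward a contradiction that $(p,\dot r)\in\mathbb{Q}*\dot P$ and a name $\dot A$ satisfy $(p,\dot r)\Vdash$ ``$\dot A\subseteq\omega_2$ has order type $\omega_1$ and is rainbow for $d$''. I would first record that $\mathbb{Q}*\dot P$ is strongly proper: $\mathbb{Q}$ is strongly proper by Claims \ref{StrongPropernessCountable} and \ref{StrongPropernessTransitive}, $\dot P$ is forced strongly proper, and strong properness is preserved under two-step iteration (if $q$ is strongly $(N,\mathbb{Q})$-generic and $q\Vdash\dot r$ is strongly $(N[\dot G],\dot P)$-generic, then $(q,\dot r)$ is strongly $(N,\mathbb{Q}*\dot P)$-generic). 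Fix a countable $N\prec H(\theta)$ containing all parameters in the club where strong genericity holds, and put $M=N\cap K\in\mathcal{S}$, $\delta=N\cap\omega_2$. Using Claim \ref{StrongPropernessCountable} on the $\mathbb{Q}$-coordinate together with the preservation lemma, extend $(p,\dot r)$ to a strongly $(N,\mathbb{Q}*\dot P)$-generic condition $(q,\dot s)$ with $M\in s_q$, and let $\rho=(q,\dot s)\restriction N\leq(p,\dot r)$ be its reduct, a condition in $N$.

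The heart of the argument is to manufacture a forced collision. Since $\rho\leq(p,\dot r)$ forces $\dot A$ infinite, by elementarity there are an extension $(q^*,\dot s^*)\leq\rho$ in $N$ and ordinals $\alpha_0<\alpha_1<\delta$ (so $\alpha_0,\alpha_1\in M$) with $(q^*,\dot s^*)\Vdash\alpha_0,\alpha_1\in\dot A$. By strong genericity $(q^*,\dot s^*)$ is compatible with $(q,\dot s)$; let $(q^+,\dot s^+)$ be a common extension. As $(q^+,\dot s^+)\leq(p,\dot r)$ still forces $\dot A$ unbounded, extend it to $(q',\dot s')$ forcing some $\beta\geq\delta$ into $\dot A$, ensuring $(\alpha_0,\beta),(\alpha_1,\beta)\notin\mathrm{dom}(c_{q'})$. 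Now I claim the hypotheses of Lemma \ref{borrow} hold for the $\mathbb{Q}$-condition $q'=(c_{q'},s_{q'})$ with these $\alpha_0<\alpha_1<\beta$, i.e. $(\alpha_0,\beta)\in M'\Leftrightarrow(\alpha_1,\beta)\in M'$ for every node $M'\in s_{q'}$. This is where the two-type node structure does the work: every node is $\in$-comparable with $M$. If $M'$ precedes $M$ then $M'\cap\omega_2<\delta$ (using Remark \ref{elaborate}(1),(2) for small and for transitive nodes inside $M$), so $\beta\notin M'$ and both pairs are out. If $M'$ succeeds $M$ then $M\in M'$ forces $M\subseteq M'$ by Remark \ref{elaborate}(1), so $\alpha_0,\alpha_1\in M'$ and both pairs are simultaneously in or out according to whether $\beta\in M'$. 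For $M'=M$ itself, $\beta\geq\delta=M\cap\omega_2$ gives $\beta\notin M$. Applying Lemma \ref{borrow} yields $q''\leq_{\mathbb{Q}}q'$ with the \emph{same} side condition and $c_{q''}(\alpha_0,\beta)=c_{q''}(\alpha_1,\beta)$; since only the $\mathbb{Q}$-coordinate changed, $(q'',\dot r')$ is a legitimate extension of $(q',\dot r')$ in $\mathbb{Q}*\dot P$ forcing ``$\alpha_0,\alpha_1,\beta\in\dot A$ and $d(\alpha_0,\beta)=d(\alpha_1,\beta)$'', contradicting rainbowness of $\dot A$.

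The step I expect to be the main obstacle is the preservation of strong properness for $\mathbb{Q}*\dot P$ together with the reduct-based amalgamation: one must verify that strong genericity for $\dot P$ can be arranged over the appropriate countable elementary submodels of the $\mathbb{Q}$-extension and that the reduct machinery for the iteration behaves as in the single-step case of Lemma \ref{strongproperness}, so that $(q^*,\dot s^*)\leq\rho$ in $N$ really is compatible with $(q,\dot s)$. By contrast, the verification of the same-side condition is clean once $\alpha_0,\alpha_1$ are placed below $\delta$ and $\beta$ above it, being essentially forced by the containment behavior of small and transitive nodes recorded in Remark \ref{elaborate}.
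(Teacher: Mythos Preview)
Your argument has a genuine gap at the step where you verify the hypothesis of Lemma~\ref{borrow}. The claim ``if $M'$ succeeds $M$ in $s_{q'}$ then $M\subseteq M'$'' is false in general: if a transitive node $W$ lies between $M$ and $M'$ in the $\in$-chain, then while $M\subset W$, one does \emph{not} get $W\subset M'$ (this is exactly the failure noted in Remark~\ref{elaborate}(1)); the intersection $W\cap M'$ is then a small node of $s_{q'}$ that may sit strictly below $M$ in rank, so that $M\cap M'=W\cap M'\subsetneq M$. Such an $M'$ can perfectly well contain $\beta$ and $\alpha_0$ but not $\alpha_1$, destroying the hypothesis of Lemma~\ref{borrow}. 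The problem is structural: you chose $\alpha_0,\alpha_1$ \emph{before} the side condition was finalized --- the extension to $(q',\dot s')$ forcing $\beta\in\dot A$ can add many new nodes above $M$, whose intersections with transitive nodes then land below $M$ and may separate the $\alpha_i$.

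The paper's proof reverses the order of operations precisely to avoid this. One first extends to a condition $t'$ that decides $\beta$ (together with a strongly generic $\dot p'$ and its reduct $\dot q$ on $N'[\dot G]$); the side condition $s_{t'}$ is now \emph{fixed}, say with $m$ nodes. Only then does one go back into $N'$, via the reduct $t'\restriction N'$, to find a condition forcing $2^m+1$ candidate $\alpha$'s into $\dot X$. Pigeonhole on the at most $2^m$ membership patterns relative to $s_{t'}$ yields two $\alpha$'s to which Lemma~\ref{borrow} applies \emph{at $t'$}, keeping the side condition unchanged; the amalgamation with the condition from $N'$ happens only afterward. With just two $\alpha$'s chosen blindly, as in your approach, there is no pigeonhole to fall back on, and the direct containment argument does not go through.

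A minor point: $\delta=N\cap\omega_2$ is not an ordinal; you mean $\sup(N\cap\omega_2)$, or $\sup(N\cap\gamma)$ as in the paper.
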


\begin{remark}\label{confuse}
More accurately, it is the coloring $c': [\omega_2]^2\to \omega_2$ such that $c'(\alpha,\beta)=(c(\alpha,\beta), \beta)$ that witnesses $\omega_2\not\to^{poly} (\omega_1)^2_{2-bdd}$. As it is clear from the context, we will continue to refer to $c$ as the witness in the following.
\end{remark}

\begin{proof}[Proof of Lemma \ref{strengthen}]
Suppose otherwise for the sake of contradiction. Let $r\in \mathbb{Q}$, $\mathbb{Q}$-name $\dot{p}, \dot{P}$, $\mathbb{Q}*\dot{P}$-name $\dot{X}$, $\gamma\in \omega^V_2+1$ such that 
\begin{enumerate}
\item $r\Vdash_{\mathbb{Q}} \dot{P}$ is a strongly proper forcing and $\dot{p}\in \dot{P}$ and 
\item $r\Vdash_{\mathbb{Q}} \dot{p}\Vdash_{\dot{P}} \sup \dot{X}=\gamma, \dot{X}$ is a rainbow subset for $c$ of order type $\omega_1$.
\end{enumerate}

Note that we include the possibility that $\gamma=\omega_2^V$ since it may be collapsed by $\mathbb{Q}*\dot{P}$. In either case, $cf(\gamma)>\omega$.

Let $G\subset \mathbb{Q}$ containing $r$ be generic over $V$. Fix some sufficiently large regular cardinal $\lambda$ and let $C=(\dot{C})^G\subset ([H(\lambda)]^\omega)^{V[G]}$ be a club that witnesses the strong properness of $P$ in $V[G]$.

\begin{claim}
For any stationary subset $T\subset [H(\lambda)]^\omega$ in $V$, $T[G]=_{def} \{M[G]: M\in T\}$ is a stationary subset of $([H(\lambda)]^\omega)^{V[G]}$.
\end{claim}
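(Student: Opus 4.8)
The plan is to prove the Claim by showing that $\mathbb{Q}$ preserves stationary subsets of $[H(\lambda)]^\omega$ in the lifted sense, which is the standard consequence of the (strong) properness established in Claim \ref{StrongPropernessCountable}. It suffices to argue by density in $V$: for every $\mathbb{Q}$-name $\dot C$ for a club in $([H(\lambda)]^\omega)^{V[\dot G]}$ and every condition $q$, I will produce $r\le q$ forcing $T[\dot G]\cap \dot C\neq\emptyset$; this yields $\Vdash_{\mathbb Q}$ ``$\check T[\dot G]$ is stationary'', which is exactly the assertion in $V[G]$. I represent the club by a $\mathbb{Q}$-name $\dot F$ for a Skolem function $F:[H(\lambda)^{V[\dot G]}]^{<\omega}\to H(\lambda)^{V[\dot G]}$ whose closure points constitute $\dot C$ (such clubs are cofinal among all clubs). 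Replacing $T$ by $T\cap\{M\prec H(\lambda): \mathbb Q\in M\}$ only shrinks $T[\dot G]$, so it is enough to treat this smaller stationary set; thus I may assume each $M\in T$ is an elementary submodel of $H(\lambda)$ with $\mathbb Q\in M$, so that $M[\dot G]$ denotes the pointwise interpretation of the $\mathbb{Q}$-names in $M$.

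Next I pass to a larger model. Fix regular $\theta\gg\lambda$ and set $\hat T=\{N\in[H(\theta)]^\omega: N\prec H(\theta)\text{ and }N\cap H(\lambda)\in T\}$. By the standard projection lemma for stationary subsets of $[\,\cdot\,]^\omega$, $\hat T$ is stationary in $[H(\theta)]^\omega$, so I can choose $N\in\hat T$ with $q,\dot C,\dot F,\mathbb Q,K,H(\lambda)\in N$. Put $M=N\cap H(\lambda)\in T$, and note $N\cap K=M\cap K\in\mathcal S$ since $K\subseteq H(\lambda)$. Because $q\in N$, the final paragraph of Claim \ref{StrongPropernessCountable} supplies the condition $r=(c_q,\mathrm{cl}(s_q\cup\{N\cap K\}))\le q$, which is strongly $(N,\mathbb{Q})$-generic.

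Now fix any generic $G\ni r$. Strong $(N,\mathbb{Q})$-genericity gives $N[G]\prec H(\theta)^{V[G]}$ with $N[G]\cap\mathrm{Ord}=N\cap\mathrm{Ord}$, and the usual analysis of generic elementary submodels identifies $N[G]\cap H(\lambda)^{V[G]}=(N\cap H(\lambda))[G]=M[G]$; since $\omega_1$ is preserved, $M[G]$ is a genuine element of $([H(\lambda)]^\omega)^{V[G]}$. As $\dot F\in N$, we have $F=\dot F^G\in N[G]$, so for any finite $x\subseteq M[G]=N[G]\cap H(\lambda)^{V[G]}$ we get $x\in N[G]$ and hence $F(x)\in N[G]\cap H(\lambda)^{V[G]}=M[G]$ by elementarity. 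Thus $M[G]$ is a closure point of $F$, i.e. $M[G]\in \dot C^G$, while $M\in T$ gives $M[G]\in T[G]$. Since this holds for every $G\ni r$, we conclude $r\Vdash M[G]\in T[\dot G]\cap\dot C$, completing the density argument.

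The technical heart, and the only place where strong properness is genuinely invoked, is the identification $N[G]\cap H(\lambda)^{V[G]}=M[G]$: one must verify that $(N,\mathbb{Q})$-genericity prevents $N[G]$ from acquiring any element of $H(\lambda)^{V[G]}$ that is not already the interpretation of a name in $N\cap H(\lambda)$, which is precisely the mechanism by which proper forcing preserves stationary subsets of $[H(\lambda)]^\omega$. The remaining ingredients—the projection lemma, the reduction to Skolem-function clubs, and the elementarity computation—are routine.
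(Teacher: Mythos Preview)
Your proof is correct and follows essentially the same route as the paper: lift to a larger $H(\theta)$ (the paper's $H(\lambda^*)$), pick $N\prec H(\theta)$ with $N\cap H(\lambda)\in T$ and containing the relevant parameters, and then use the elementarity of $N[G]$ together with the identification $N[G]\cap H(\lambda)^{V[G]}=(N\cap H(\lambda))[G]$ to conclude that $M[G]$ is closed under the given function. The only real difference is one of presentation: the paper argues informally ``in $V[G]$'' and simply asserts $M'[G]\prec H(\lambda^*)[G]$ without isolating the density step, whereas you make the density argument explicit and invoke Claim~\ref{StrongPropernessCountable} to produce an $(N,\mathbb{Q})$-generic condition below $q$---which is exactly what is needed to justify that elementarity. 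Your version is the more carefully stated of the two.
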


\begin{proof}[Proof of the claim]
In $V[G]$, let $f: H(\lambda)^{<\omega} \to H(\lambda)$. In $V$, let $\lambda^*$ be much larger regular cardinal than $\lambda$ and $M'\prec H(\lambda^*)$ containing $\dot{f}, H(\lambda)$ be such that $M=M'\cap H(\lambda)\in T$. Then $M[G]$ is closed under $f$, since for any $\bar{a}\in M[G]\cap [H(\lambda)^{V[G]}]^{<\omega} $, $f(a)\in M'[G]\cap (H(\lambda))^{V[G]} =M'[G]\cap H(\lambda)[G]=(M'\cap H(\lambda))[G]$. The last equality holds since for any $\dot{\tau}\in M', \dot{\sigma}\in H(\lambda)$ such that $(\dot{\tau})^G=(\dot{\sigma})^G$, by the fact that $M'[G]\prec H(\lambda^*)[G]$, $M'[G]\models $ there exists $\dot{\sigma}\in H(\lambda)^V$, $\dot{\tau}^G=\dot{\sigma}^G$. It is easy to see this is sufficient since $M'[G]\cap H(\lambda)^V=M'\cap H(\lambda)^V$.
\end{proof}

Find a countable $N'\in V$ such that $N'\prec H(\lambda)^V$ contains $r,\mathbb{Q}, \dot{p}, \dot{P}, \dot{X},\gamma$. Moreover, $N=_{def} N' \cap K \in \mathcal{S}$ and $N'[G]\in C$.

Let $\gamma'=\sup N\cap \gamma$. Extend $r$ to $t$ such that $N\in s_t$ by Lemma \ref{strongproperness}. Consequently, $t$ is strongly $(N', \mathbb{Q})$-generic. Find $t'\leq_{\mathbb{Q}} t$, $\beta\in [\gamma', \gamma)$ and $\mathbb{Q}$-names $\dot{p}'$, $\dot{q}$ such that $\dot{q} \in N'$ and $t'\Vdash_{\mathbb{Q}} \dot{p}'$ is strongly $(N'[\dot{G}], \dot{P})$-generic and $\dot{p}'\leq_{\dot{P}}\dot{p}$, $\dot{p}'\restriction N'[\dot{G}]=\dot{q}$ and $\dot{p}'\Vdash_{\dot{P}} \beta\in \dot{X}$. Let $m=|t'|<\omega$.

Now consider $D=\{a\leq_{\mathbb{Q}} t'\restriction N': \exists \dot{b} \  a\Vdash_{\mathbb{Q}} \dot{b}\leq_{\dot{P}} \dot{q}, \exists \alpha_0<\cdots <\alpha_{2^m} \ \dot{b}\Vdash_{\dot{P}} \forall i\leq 2^m \ \alpha_i\in \dot{X}\}$. This set is dense below $t'\restriction N'$ and is in $N'$. Pick $a\in D\cap N'$ and $\dot{b}, \alpha_0,\cdots, \alpha_{2^m} \in N'$ as its witness. By the Pigeonhole principle, there exist $i\neq j\leq 2^m$ such that for any $M \in  \mathcal{S}\cap s_{t'}$, $(\alpha_i,\beta)\in M$ iff $(\alpha_j,\beta)\in M$. Apply Lemma \ref{borrow}, there exists $t''\leq t'$ such that $c_{t''}(\alpha_i,\beta)=c_{t''}(\alpha_j,\beta)$ with $s_{t''}=s_{t'}$ and $dom(c_{t''})=dom(c_{t'})\cup \{(\alpha_i,\beta), (\alpha_j,\beta)\}$.
As $a\leq_{\mathbb{Q}} t'\restriction N'=t''\restriction N'$, $a$ and $t''$ are compatible. Find a common lower bound $t'''\leq_{\mathbb{Q}} a, t''$. Then $t'''\Vdash_{\mathbb{Q}}\dot{b}\leq_{\dot{P}} \dot{q}=\dot{p}'\restriction N'[\dot{G}]$ and $\dot{b}\in N'[\dot{G}]$. Hence $t'''$ forces $\dot{b}$ and $\dot{p}'$ are compatible. Let $\dot{w}$ be a common lower bound. Then $(t''', \dot{w})$ forces $c(\alpha_i,\beta)=c(\alpha_j,\beta)$ as well as $\alpha_i, \alpha_j, \beta\in \dot{X}$. This is a contradiction since $(t''',\dot{w})\leq_{\mathbb{Q}*\dot{P}} (r,\dot{p})$ and  $(r,\dot{p}) \Vdash _{\mathbb{Q}*\dot{P}} \dot{X}$ is a rainbow subset for $c$.

\end{proof}

An immediate consequence is $\omega_2\not\to^{poly} (\omega_1)^2_{2-bdd}$ is consistent with the continuum being arbitrarily large as Cohen forcings are strongly proper. This provides an alternative answer to a question in \cite{MR2902230}, which was originally answered in \cite{MR3437648} using a different method.

However In this model, there exists a c.c.c forcing that forces a rainbow subset into $c\restriction [\omega_1]^2$. In $V^{\mathbb{Q}}$, let $R$ be the poset $\{a\in [\omega_1]^{<\omega}: a \text{ is a rainbow subset for }c\}$ order by reverse inclusion. By Remark \ref{confuse}, $a\in [\omega_1]^{<\omega}$ is a rainbow subset for $c$ if there is no $\alpha_0<\alpha_1<\beta \in c$ such that $c(\alpha_0,\beta)=c(\alpha_1,\beta)$.
It is easy to see that in $V^{\mathbb{Q}}$, $R$ adds an unbounded subset of $\omega_1^V$.

\begin{lemma}
In $V^{\mathbb{Q}}$, $R$ is c.c.c.
\end{lemma}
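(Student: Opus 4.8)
The plan is to show that any uncountable collection of conditions in $R$ must contain two compatible elements, using a standard $\Delta$-system argument adapted to the structure of rainbow subsets. First I would fix an uncountable set $\{a_\xi : \xi<\omega_1\}$ of conditions, each a finite rainbow subset of $\omega_1$ (viewed through $c$ as in Remark \ref{confuse}). By thinning via the $\Delta$-system lemma, I would assume the $a_\xi$ form a $\Delta$-system with root $\rho$, so that the sets $a_\xi \setminus \rho$ are pairwise disjoint and, by further thinning, all have the same size $n$ and the same pattern of interleaving relative to $\rho$. I would also arrange that the values of $c$ on pairs contained in $\rho$ are fixed, and that the $a_\xi\setminus\rho$ lie above $\sup\rho$ in an increasing sequence of disjoint blocks.

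The key step is to find $\xi<\eta$ such that $a_\xi\cup a_\eta$ is again a rainbow subset, i.e.\ still satisfies the $2$-bounded (bounding) requirement with respect to $c$. The danger is a \emph{color collision}: a pair $(\alpha_0,\beta)$ from $a_\xi$ and a pair $(\alpha_1,\beta)$ from $a_\eta$ with a common top $\beta$ and $c(\alpha_0,\beta)=c(\alpha_1,\beta)$, which would produce two distinct pairs of the same color sharing their larger coordinate, or more seriously a third pair completing a monochromatic triple. Since the blocks $a_\xi\setminus\rho$ are pairwise disjoint and increasing, any shared top $\beta$ of a cross pair must lie in the later block $a_\eta\setminus\rho$ (or in $\rho$), and the smaller coordinate in the earlier block; so I would control collisions by examining, for each element $\beta$ that can serve as a top, the finitely many colors $c(\alpha,\beta)$ realized by $\alpha$ below $\beta$ within a single block. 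The crucial point is that $c$ itself is $2$-bounded in $V^{\mathbb{Q}}$ (that is exactly what $\omega_2\not\to^{poly}(\omega_1)^2_{2-bdd}$, restricted to $\omega_1$, gives us via the bounding requirement built into $\mathbb{Q}$): for each color $\gamma$ and each $\beta$, at most two ordinals $\alpha<\beta$ satisfy $c(\alpha,\beta)=\gamma$.

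To exploit this, I would count: there are only countably many colors, but more to the point, for a fixed candidate top $\beta$, the $2$-boundedness of $c$ limits how many small coordinates can collide in color. I would set up a coloring of pairs $\{\xi,\eta\}\in[\omega_1]^2$ recording, for each relevant pair of positions within the blocks, whether a collision of the forbidden type occurs; this auxiliary coloring takes values in a countable (indeed finite, after fixing the combinatorial pattern) set. Applying the fact that $\omega_1$ is not the union of countably many finite-pattern classes without a large monochromatic-for-``no collision'' set — concretely, a further application of the Pigeonhole/Dushnik–Miller style reduction $\omega_1\to(\omega_1,\omega+1)^2$ or simply the observation that the ``bad'' cross pairs are constrained by $2$-boundedness to finitely many $\eta$ per $\xi$ — I would extract $\xi<\eta$ with no collision, so that $a_\xi\cup a_\eta\in R$ is a common extension.

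The main obstacle I anticipate is verifying that merging two blocks cannot create a monochromatic \emph{triple}, not merely a monochromatic pair: the bounding requirement forbids three pairs $(\alpha_0,\beta),(\alpha_1,\beta),(\alpha_2,\beta)$ of equal color with common top, so I must rule out the situation where one block already contributes two same-colored pairs with top $\beta$ and the other contributes a third. Here I would lean on the fact that each $a_\xi$ is \emph{itself} rainbow, so within a single block no color repeats with a fixed top at all; hence a forbidden triple across $a_\xi\cup a_\eta$ would require at least one pair from each block sharing the top $\beta$, reducing the problem back to cross collisions, which the $2$-boundedness of $c$ bounds to finitely many obstructions per $\xi$. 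Closing this counting carefully — ensuring the finitely-many-exceptions bound is uniform enough to survive the $\Delta$-system thinning — is the delicate part, but once it is in place the desired compatible pair is immediate and c.c.c.\ follows.
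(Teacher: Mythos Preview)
Your approach has a genuine gap at the central counting step. You assert that ``bad'' cross pairs are constrained by $2$-boundedness to finitely many $\eta$ per $\xi$, but this is not what $2$-boundedness gives you. After the $\Delta$-system reduction with root $\rho$ and tails $b_\xi$, one type of collision that can ruin $a_\xi\cup a_\eta$ is the following: two elements $\alpha_0<\alpha_1$ of $b_\xi$ satisfy $c(\alpha_0,\beta)=c(\alpha_1,\beta)$ for some top $\beta\in b_\eta$. The $2$-boundedness of $c$ only says that for a \emph{fixed} $\beta$ and a fixed color there are at most two preimages; it places no restriction whatsoever on the set $\{\beta:c(\alpha_0,\beta)=c(\alpha_1,\beta)\}$ for two fixed ordinals $\alpha_0,\alpha_1$. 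That set may well be uncountable, and then uncountably many $\eta$ are bad for this single $\xi$. Your Dushnik--Miller fallback does not help either, since the ``finitely many exceptions'' premise is exactly what fails. In short, you are trying to prove, from $2$-boundedness alone, that the finite-rainbow poset of an arbitrary $2$-bounded coloring on $\omega_1$ is c.c.c.; but the lemma concerns the specific generic coloring $c$, and the argument must use its genericity.

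The paper's proof is accordingly of a different nature. It does not work inside $V^{\mathbb{Q}}$ at all: it pulls the purported antichain back to a $\mathbb{Q}$-name, fixes a countable $N'\prec H(\lambda)$, and uses strong properness of $\mathbb{Q}$ to find a condition $q$ deciding one member $h$ of the antichain with tail above $N'\cap\omega_1$, together with its reduct $q\restriction N'$. Inside $N'$ one then finds $t\le q\restriction N'$ deciding another member $h'$. The crucial move---unavailable in your outline---is that one can \emph{extend} $q$ to $q^*$ (same side condition) so that $c_{q^*}$ assigns \emph{fresh, injective} values on all cross pairs in $(h'\setminus r)\times(h\setminus r)$, avoiding $\delta=N'\cap\omega_1$ and the finitely many values already used. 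This is possible precisely because $c$ is still being forced and Claim~\ref{enlarge} gives the required density. Compatibility of $q^*$ with $t$ then yields a condition forcing $h\cup h'$ to be rainbow, contradicting the antichain hypothesis. The freedom to choose cross colors is doing the real work; $2$-boundedness alone does not suffice.
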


\begin{proof}
Otherwise, let $\langle \dot{\tau}_i: i<\omega\rangle$ be a head-tail-tail system with root $r\in [\omega_1]^{<\omega}$ that is forced to be an uncountable antichain by $p$.
Let $N'\prec H(\lambda)$ contain relevant objects for some sufficiently large regular cardinal $\lambda$. Let $\delta=N'\cap \omega_1$. Let $q\leq p$ be a strongly $(N',\mathbb{Q})$-generic condition that determines some $\dot{\tau}_j=h$ such that $\min (h-r)\geq \delta$. Let $q'=q\restriction N'$. Find $t\leq q'$ in $N'$ such that $t$ decides some $\dot{\tau}_i = h' \in N'$ such that $\min (h'-r)\geq \max_{(\alpha,\beta)\in dom(c_q)\cap N'} \max \{\alpha,\beta\} +1$. Now we extend $q$ to $q^*$ such that $s_{q}=s_{q^*}$ and $dom(c_{q^*})$ includes $h'\times h$ such that $c_{q^*} [(h'-r)\times (h-r)] \cap (\delta\cup range(c_q))=\emptyset$, $c_{q^*}\restriction (h'-r)\times (h-r)$ is injective and $q^*\restriction N' = q'$. To see that we can do this, enumerate $(h'-r)\times (h-r)$ as $\{(\alpha_i,\beta_i): i<k\}$. We inductively add $(\alpha_i,\beta_i)$ to $c_q$ by Claim \ref{enlarge} while maintaining the other requirements. More precisely, suppose we have added $(\alpha_j,\beta_j)$ to the domain of $c_p$ for $j<i$. Let $M\in s_p$ be of the minimum rank such that $(\alpha_i,\beta_i)\in M$. Then $M\cap \omega_1>\max\{\delta, f_{\beta_i}(\alpha_i)\}$. Hence we only need to avoid finitely many elements in $M\cap \omega_1-(\max\{\delta, f_{\beta_i}(\alpha_i)\}+1)$, which is clearly possible. $q^*$ is compatible with $t$ since $t\leq q'=q^*\restriction N'$ and $q^*\leq q$ which is strongly $(N',\mathbb{Q})$-generic. But a common extension of $q^*$ and $t$ forces that $\dot{\tau}_i \cup \dot{\tau}_j$ is rainbow. We have reached the desired contradiction.
\end{proof}

\begin{remark}
Similarly, if we force with finite rainbow subsets of $\omega_2$, then we will add a rainbow subset of size $\aleph_2$ for $c$ in a c.c.c forcing extension.
\end{remark}


\section{Some remarks and questions on partition relations of triples}\label{generalization}

Recall that Todorcevic in \cite{MR716846} showed that it is consistent that $\omega_1\to^{poly} (\omega_1)_{<\omega-bdd}^2$. In fact, he showed a stronger conclusion, namely for any $<\omega$-bounded coloring on $[\omega_1]^2$, it is always possible to partition $\omega_1$ into countably many rainbow subsets. The plain generalization of this result to 3-dimensional case fails miserably.

\begin{remark}
$\omega_1\not \to^{poly} (4)^3_{<\omega-bdd}$.
\end{remark}

\begin{proof}
Fix $a: [\omega_1]^2\to \omega$ such that for each $\alpha<\omega_1$, $a(\cdot, \alpha)$ is an injection from $\alpha$ to $\omega$. Define $f: [\omega_1]^3\to \omega$ such that $\{\alpha,\beta,\gamma\}_{<}$ is defined to be $\max \{a(\alpha,\gamma), a(\beta,\gamma)\}\in \omega$. Now define $g: [\omega_1]^3\to \omega_1$ to be $g(\{\alpha,\beta,\gamma\})=(f(\{\alpha,\beta,\gamma\}),\gamma)$. Note $g$ is $<\omega$-bounded, since for each $\gamma\in \omega$, there are only finitely many $\alpha<\gamma$ such that $a(\alpha,\gamma)<n$. For any $A=\{\alpha_0<\alpha_1<\alpha_2<\alpha_3\}\subset \omega_1$ of size 4, pick $i<3$ such that for any $j<3$ and $j\neq i$, $a(\alpha_j,\alpha_3)<a(\alpha_i,\alpha_3)=n$. Say $i=0$ for the sake of demonstration. Then $\{\alpha_0, \alpha_1,\alpha_3\}$ and $\{\alpha_0, \alpha_2,\alpha_3\}$ get the same color $(n,\gamma)$.
\end{proof}

\begin{remark}
There are various limitations on Ramsey Theorems for higher dimensions. For example, $2^\omega \not\to (\omega+2)^3_2$. Hence we need other methods to prove higher dimensional rainbow Ramsey theorems.
\end{remark}

Given a 2-bounded normal coloring $f$ on $[\delta]^3$, let us try to classify what types of obstacles there are for getting a rainbow subset.

\begin{enumerate}
\item[Type 1] for some $\alpha,\beta,\alpha',\beta'<\gamma$ such that $\{\alpha,\beta\}\cap \{\alpha',\beta'\}=\emptyset$ and $f(\alpha,\beta,\gamma)=f(\alpha',\beta',\gamma)$
\item[Type 2] for some $\alpha<\beta<\gamma<\delta$, $f(\alpha,\gamma, \delta)=f(\alpha,\beta, \delta)$
\item[Type 3] for some $\alpha<\beta<\gamma<\delta$, 
$f(\alpha,\beta,\delta)=f(\beta,\gamma,\delta)$
\item[Type 4] for some $\alpha<\beta<\gamma<\delta$, 
$f(\alpha,\gamma,\delta)=f(\beta,\gamma,\delta)$.
\end{enumerate}

\begin{remark}
By repeatedly applying the Ramsey theorem on $\omega$ to eliminate bad tuples of the 4 types above, one can show $\omega_1\to^{poly} (\omega+k)^3_{l-bdd}$ for any $k,l\in \omega$. This is already in contrast with the dual statements in Ramsey theory.
\end{remark}

\begin{question}
Can we prove in ZFC that $\omega_1\to^{poly} (\alpha)^3_{2-bdd}$ for any $\alpha<\omega_1$?
\end{question}

\begin{question}
Is $\omega_1\to^{poly} (\omega_1)^3_{2-bdd}$ consistent? Is it a consequence of $\mathrm{PFA}$?
\end{question}

\bibliographystyle{plain}
\bibliography{bib}

\Addresses

\end{document}